\def\thm@space@setup{%
  \thm@preskip=1cm plus .5cm minus .5cm
  \thm@postskip=.5cm plus .6cm minus .5cm 
}
\newtheorem{thm}{Theorem}
\newtheorem{lma}{Lemma}
\newtheorem{cor}{Corollary}
\newtheorem{rmk}{Remark}
\newtheorem{app}{Application}
\numberwithin{thm}{section}
\numberwithin{lma}{section}
\numberwithin{dfn}{section}
\numberwithin{cor}{section}
\numberwithin{rmk}{section}
\numberwithin{prop}{section}
\numberwithin{app}{section}
\def\mfq{{\mathfrak \mfq}}
\def\mfm{{\mathfrak m}}
\def\mfc{{\mathfrak C}}
\def\mfa{{\mathfrak A}}
\def\mcm{{\mathcal M}}
\def\mcp{{\mathcal P}}
\def\mfp{{\mathfrak p}}
\def\mfq{{\mathfrak q}}
\newcommand*{\thmref}[1]{Theorem~\ref{#1}}
\newcommand*{\lmaref}[1]{Lemma~\ref{#1}}
\newcommand*{\rmkref}[1]{Remark~\ref{#1}}
\title{On the distribution of the total number of generators of $h$-free and $h$-full elements in an abelian monoid}
\date{}
\begin{document}

\author{Sourabhashis Das, Wentang Kuo, Yu-Ru Liu}

\maketitle 

\begin{abstract}
Let $\mfm$ be an element of an abelian monoid, with $\Omega(\mfm)$ denoting the total number of prime elements generating $\mfm$. We study the moments of $\Omega(\mfm)$ over subsets of $h$-free and $h$-full elements, establishing the normal order of $\Omega(\mathfrak{m})$ within these subsets. This work continues the study on the distribution of generalized arithmetic functions over $h$-free and $h$-full elements in abelian monoids as introduced in the authors' previous work \cite{dkl5}.
\end{abstract}

\section{Introduction}
\footnotetext{\textbf{2020 Mathematics Subject Classification: 11N80, 11K65, 20M32.}}\footnotetext{Keywords: Omega function, abelian monoids, the first and the second moments, $h$-free and $h$-full elements.}\footnotetext{The research of W. Kuo and Y.-R. Liu is supported by the Natural Sciences and Engineering Research Council of Canada (NSERC) grants.}
In their seminal work \cite{hardyram}, Hardy and Ramanujan demonstrated that, for a natural number $n$, the arithmetic functions $\omega(n)$, which counts the number of distinct prime divisors of $n$, and $\Omega(n)$, which counts the total number of prime divisors (counted with multiplicity), typically assume values close to $\log \log n$. This phenomenon, wherein an arithmetic function approximates a well-understood, non-negative function for all but a zero-density subset of the natural numbers, is referred to as its normal order. The original proof by Hardy and Ramanujan employed sophisticated tools from analytic number theory. Later, Tur\'an \cite{turan} furnished a more elementary and accessible argument by estimating the first and second moments of these functions. These foundational results have since been refined through more precise analyses of the associated error terms (see, for example, \cite{ElbGor} and \cite{saidak}).

The results of Hardy and Ramanujan, as well as those of Tur\'an, naturally lend themselves to further generalization. In \cite[Chapter 6, Theorem 2.4]{jk2}, first moment estimates for analogs of the functions $\omega(n)$ and $\Omega(n)$ were established in the setting of countably generated abelian monoids. Building upon these developments, the third author, in \cite{liuturan}, extended the classical framework by analyzing both the first and second moments of the analog of $\omega(n)$, thereby establishing its normal order. Following similar techniques to those in \cite{liuturan}, one can also derive the second moment for the analog of $\Omega(n)$, leading to the conclusion that it likewise exhibits a normal order in the monoid context. This line of inquiry was further developed in \cite{dkl5}, where the distribution of the $\omega(n)$-analog was examined over the subsets of $h$-free and $h$-full elements of an abelian monoid. In this article, we extend the investigation to the analog of $\Omega(n)$ over the same subsets. Unlike the previous study, the key distinction here lies in the inclusion of multiplicities of prime elements, which significantly influences the distribution of the function. To address this, we develop new identities involving sums over prime elements, which are established in the present work. Furthermore, we demonstrate several applications of our general theorems, particularly in the settings of ideals in number fields, effective divisors in global function fields, and effective zero-cycles on geometrically irreducible projective varieties.

We begin by introducing the framework of countably generated abelian monoids, along with the notions of $h$-free and $h$-full elements. We also recall several relevant results from the authors’ earlier work \cite{dkl5}, which will serve as a foundation for the present study.

Let $\mathcal{P}$ be a countable set of elements with a map
$$N: \mathcal{P} \rightarrow \mathbb{N} \backslash \{1\}, \quad \mfp \mapsto N(\mfp).$$
We call the elements of $\mathcal{P}$ as the \textit{prime elements}, and the map $N(\cdot)$ as the \textit{Norm map}. Let $\mathcal{M}$ be a \textit{free abelian monoid} generated by elements of $\mathcal{P}$. In other words, for each $\mfm \in \mathcal{M}$, we write
$$\mfm = \sum_{\mfp \in  \mathcal{P}} n_\mfp(\mfm) \mfp,$$
with $n_\mfp(\mfm) \in \mathbb{N} \cup \{ 0 \}$ and $n_\mfp(\mfm) =0$ for all but finitely many $\mfp$. We call $n_\mfp(\mfm)$ the \textit{multiplicity} of $\mfp$ in $\mfm$. We extend the norm map $N$ on $\mcm$ as the following:
\begin{align*}
    N : \mcm & \rightarrow \mathbb{N} \\
    \mfm = \sum_{\mfp \in \mathcal{P}} n_\mfp(\mfm) \mfp & \longmapsto N(\mfm) := \prod_{\mfp \in \mathcal{P}} N(\mfp)^{n_\mfp(\mfm)}. 
\end{align*}
Thus, $N$ can be extended to a monoid homomorphism from $(\mcm,+)$ to $(\mathbb{N},\cdot)$. Let $X$ be a countable subset of $\mathbb{Q}$ that contains the image $\text{Im}(N(\mcm))$ with an extra condition: if $x_1,x_2 \in X$, the fraction $x_1/x_2$ belongs to $X$, too. By \cite[Theorem 2]{liu}, without loss of generality, one can assume $X = \mathbb{Q}$ or $X = \{ q^{z}: z \in \mathbb{Z} \}$ where $q$ is a power of a natural number strictly greater than 1.

Given $\mcp$, $\mcm$, $X$, and for sufficiently large $x \in X$, we assume that the following condition hold: 
\begin{equation}\label{star}
    \mcm(x) := \sum\limits_{\substack{\mfm \in \mcm \\ N(\mfm) \leq x}} 1 = \kappa x + O(x^\theta), \quad \text{ for some } \kappa > 0 \text{ and } 0 \leq \theta < 1. \tag{$\star$}
\end{equation}
For each $\mfm \in \mcm$, we define
$$\omega(\mfm) = \sum_{\substack{\mfp \in \mathcal{P} \\ n_\mfp(\mfm) \geq 1}} 1,$$
the number of elements of $\mathcal{P}$ that generates $\mfm$, counted without multiplicity.

We also define
$$\Omega(\mfm) = \sum_{\substack{\mfp \in \mathcal{P} \\ n_\mfp(\mfm) \geq 1}} n_\mfp(\mfm),$$
the total number of elements of $\mathcal{P}$ that generates $\mfm$, counted with multiplicity.


For a non-zero element $\mathfrak{m} \in \mathcal{M}$, let the prime element factorization of $\mathfrak{m}$ be given as
\begin{equation}\label{factorization}
\mathfrak{m} = s_1 \mfp_1 + \cdots + s_r \mfp_r,
\end{equation}
where $\mfp_i'$s are its distinct prime elements and $s_i'$s are their respective non-zero multiplicities. Here, $\omega(\mfm) = r$, $\Omega(\mfm) = \sum_{i=1}^r s_i$, and 
$$N(\mfm) = N(\mfp_1)^{s_1} \cdots N(\mfp_r)^{s_r}.$$ 

Let $h \geq 2$ be an integer. We say $\mathfrak{m}$ is an $h$\textit{-free} element if $s_i \leq h-1$ for all $i \in \{1, \cdots, r\}$, and we say $\mathfrak{m}$ is an $h$\textit{-full} element if $s_i \geq h$ for all $i \in \{1, \cdots, r\}$
. Let $\mathcal{S}_h$ denote the set of $h$-free elements and $\mathcal{N}_h$ denote the set of $h$-full elements. The distribution of $h$-free elements in $\mcm$ is well-established. To demonstrate the result, we introduce the generalized $\zeta$-function which is an analog of the classical Riemann $\zeta$-function as the following:
\begin{equation}\label{gzeta}
    \zeta_\mcm(s) := \sum_{\substack{\mathfrak{m}}} \frac1{(N(\mathfrak{m}) )^s} 
= \prod_{\mfp} \Big( 1- N (\mfp)^{-s}\Big)^{-1}
\ \text{for } \Re (s) >1,
\end{equation}
where $\mathfrak{m}$ and $\mfp$ respectively range through the non-zero elements in $\mcm$ and the prime elements in $\mcp$. The absolute convergence of the series is explained in \cite{dkl5}.

Let $\mathcal{P}, \ \mcm$, and $X$ satisfy the Condition \eqref{star}. Let $x \in X$ and let $\mathcal{S}_h(x)$ denote the set of $h$-free elements with norm $N(\cdot)$ less than or equal to $x$. Since Condition \eqref{star} satisfies \cite[Chapter 4, Axiom A]{jk2}, thus, by \cite[Chapter 4, Proposition 5.5]{jk2}, we have:
\begin{equation}\label{hfreeidealcount}
    |\mathcal{S}_h(x)| = \frac{\kappa}{\zeta_\mcm(h)} x + O_h \big( R_{\mathcal{S}_h}(x) \big),
\end{equation}
where $|S|$ denotes the cardinality of the set $S$, and where
   \begin{equation}\label{RSh(x)}
    R_{\mathcal{S}_h}(x) = \begin{cases}
    x^\theta  & \text{ if } \frac{1}{h} < \theta, \\
    x^\theta (\log x) & \text{ if } \frac{1}{h} = \theta, \\
    x^{\frac{1}{h}} & \text{ if } \frac{1}{h} > \theta.
\end{cases}
    \end{equation}
\begin{rmk}\label{remark1} 
    In this paper, for convenience, we shall use $R_{\mathcal{S}_h}(x) \ll x^{\tau}$ for some $\tau \in [1/h,1)$, which is evident from the above result. For the first case in \eqref{RSh(x)}, $\tau = \theta$. For the second case, $\tau = \frac{1}{h} + \epsilon$ for any $\epsilon \in \left( 0, \frac{h-1}{h} \right)$, and for the third case, $\tau = 1/h$. 
\end{rmk}
Let $\gamma_h$ be the convergent Euler product given as 
\begin{equation}\label{gammahk}
    \gamma_{\scaleto{h}{4.5pt}} = \gamma_{\scaleto{h,\mcm}{5.5pt}} := \prod_\mfp \left( 1 + \frac{N(\mfp) - N(\mfp)^{1/h}}{N(\mfp)^2 \left( N(\mfp)^{1/h} - 1 \right)}\right).
\end{equation}
Let $\mathcal{N}_h(x)$ denote the set of $h$-full elements with norm $N(\cdot)$ less than or equal to $x$. For the distribution of $h$-full elements, from \cite[Theorem 1.1]{dkl5}, we have:
$$|\mathcal{N}_h(x)| = \kappa \gamma_{\scaleto{h}{4.5pt}} x^{1/h} + O_h \big( R_{\mathcal{N}_h}(x) \big),$$
where
\begin{equation}\label{E2(x)}
    R_{\mathcal{N}_h}(x) = \begin{cases}
        x^{\frac{\theta}{h}} & \text{ if } \frac{h}{h+1} < \theta, \\
        x^{\frac{1}{h+1}} (\log x) & \text{ if } \frac{h}{h+i} = \theta  \text{ for some } i \in \{ 1, \ldots, h-1 \}, \\
        x^{\frac{1}{h+1}} & \text{ if } \frac{h}{h+1} > \theta \text{ and } \frac{h}{h+i} \neq \theta \text{ for any } i \in \{ 1, \cdots, h-1 \}.
    \end{cases} 
\end{equation}
\begin{rmk}\label{remark2}
    In this paper, again for convenience, we shall use $R_{\mathcal{N}_h}(x) \ll x^{\upsilon/h}$ for some $\upsilon \in [h/(h+1),1)$. For the first case in \eqref{E2(x)}, $\upsilon = \theta$. For the second case, $\upsilon = \frac{h}{h+1} + \epsilon$ for any $\epsilon \in \left( 0,\frac{1}{h+1} \right)$, and for the third case, $\upsilon = \frac{h}{h+1}$. 
\end{rmk}

In \cite{dkl5}, the aforementioned distribution results were employed to analyze the moments of $\omega(\mfm)$ over the subsets of $h$-free and $h$-full elements. In particular, in \cite[Theorem 1.2]{dkl5}, we established the first and the second moments of $\omega(\mfm)$ over $h$-free elements, and in \cite[Theorem 1.3]{dkl5}, we established the first and the second moments of $\omega(\mfm)$ over $h$-full elements.

In the present work, we investigate the distribution of $\Omega(\mfm)$ and establish results analogous to those previously obtained for $\omega(\mfm)$.

Let $x \in X$. Let $\mathfrak{A}$ and $\mathfrak{B}$ be constants defined as
\begin{equation}\label{A}
    \mfa:= \lim_{x \rightarrow \infty} \left( \sum_{\substack{\mfp \in \mathcal{P} \\ N(\mfp) \leq x}} \frac{1}{N(\mfp)} - \log \log x \right),
\end{equation}
and
\begin{equation}\label{B}
    \mathfrak{B} := \begin{cases}
        - \pi^2/6  & \text{if } X = \mathbb{Q} \text{ and} \\
         (\log \log \mfq)^2 - \pi^2/6 & \text{if } X = \{ \mfq^{z} : z \in \mathbb{Z} \}.
    \end{cases}
\end{equation}
The existence of the constant $\mathfrak{A}$ is explained in \cite[Lemma 2]{liuturan}. For instance, if $\mcm = \mathbb{N}$, 
\begin{equation*}
\mathfrak{A} = \gamma + \sum_p \left( \log \left( 1 - \frac{1}{p} \right) + \frac{1}{p} \right) \approx 0.2615,
\end{equation*}
with $\gamma \approx 0.57722$, the Euler-Mascheroni constant, and where the sum runs over all primes.

Let $\mathfrak{C}_3$, $\mathfrak{C}_3'$, and $\mathfrak{C}_4$\footnote{We reserve the subscripts 1 and 2 for the constants used in our previous work \cite{dkl5} to maintain clarity in this extension of those results.} be three new constants (dependent on $h$) defined as
\begin{equation}\label{def-C3}
    \mathfrak{C}_3 := \mfa + \sum_\mfp \frac{N(\mfp)^h -h N(\mfp)^2 + hN(\mfp) - 1}{N(\mfp)(N(\mfp)-1)(N(\mfp)^h - 1)},
\end{equation}
\begin{small}
\begin{align}\label{def-C3'}
    & \mathfrak{C}_3' \notag \\
    & := \mfa  + \notag \\
    & \sum_\mfp \frac{N(\mfp)^h(3N(\mfp)-1) + (N(\mfp)-1)^2 - N(\mfp)(h^2 N(\mfp)^2 + (-2h^2+2h+1) N(\mfp) + (h-1)^2)}{N(\mfp)(N(\mfp)-1)^2(N(\mfp)^h - 1)},
\end{align}
\end{small}
and
\begin{equation}\label{def-C4}
    \mathfrak{C}_4 := \mathfrak{C}_3^2 + \mathfrak{C}_3' + \mathfrak{B} - \sum_\mfp  \left( \frac{N(\mfp)^{h} -h N(\mfp) + h - 1}{(N(\mfp)-1)(N(\mfp)^h - 1)} \right)^2.
\end{equation}

For the distribution of $\Omega(\mfm)$ over $h$-free elements, we prove:
\begin{thm}\label{hfreeOmega}
Let $\mathcal{P}, \mcm$, and $X$ satisfy the Condition \eqref{star}. Let $x \in X$ and $h \geq 2$ be an integer. Let $\mathcal{S}_h(x)$ be the set of $h$-free elements with norm $N(\cdot)$ less than or equal to $x$. Then, we have
$$\sum_{\mathfrak{m} \in \mathcal{S}_h(x)} \Omega(\mfm) = \frac{\kappa}{\zeta_\mcm(h)} x \log \log x + 
\frac{\kappa \mfc_3}{\zeta_\mcm(h)} x
+ O_h \left( \frac{x}{\log x}\right),$$
and
\begin{align*}
\sum_{\mfm \in \mathcal{S}_h(x)}  \Omega^2(\mfm) 
& = \frac{\kappa}{\zeta_\mcm(h)}  x (\log \log x)^2 + \frac{\kappa(2 \mathfrak{C}_3 + 1)}{\zeta_\mcm(h)} x \log \log x + \frac{\kappa \mathfrak{C}_4}{\zeta_\mcm(h)} x + O_h \left( \frac{x \log \log x}{\log x}\right),
\end{align*}
where $\zeta_\mcm(s)$ is defined in \eqref{gzeta}.
\end{thm}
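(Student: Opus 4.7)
The overall strategy is to swap sums via the identity $\Omega(\mfm) = \sum_\mfp n_\mfp(\mfm)$, reducing both moments to sums indexed by prime elements, and then apply (i) a coprime-to-$\mfp$ version of the $h$-free counting estimate \eqref{hfreeidealcount} together with (ii) the Mertens-type relation $\sum_{N(\mfp)\le x} 1/N(\mfp) = \log\log x + \mfa + O(1/\log x)$ implicit in the existence of the limit \eqref{A}.

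For the first moment, exchanging the order of summation gives
\[
\sum_{\mfm\in\mathcal{S}_h(x)} \Omega(\mfm) \;=\; \sum_\mfp \sum_{j=1}^{h-1} j\,\big|\{\mfm\in\mathcal{S}_h(x): n_\mfp(\mfm)=j\}\big|.
\]
Via the bijection $\mfm \leftrightarrow \mfm - j\mfp$, the inner count equals the number of $h$-free elements of norm at most $x/N(\mfp)^j$ that are coprime to $\mfp$. An asymptotic for this count with main term $\frac{\kappa y}{\zeta_\mcm(h)}\cdot\frac{1-N(\mfp)^{-1}}{1-N(\mfp)^{-h}}$ follows either from the corresponding Dirichlet series $\zeta_\mcm(s)(1-N(\mfp)^{-s})/(\zeta_\mcm(hs)(1-N(\mfp)^{-hs}))$ or from the identity $|\mathcal{S}_h(y)|=\sum_{i=0}^{h-1}\big|\{\mfn\in\mathcal{S}_h(y/N(\mfp)^i): \mfp\nmid\mfn\}\big|$ combined with \eqref{hfreeidealcount}. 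After summing $j\cdot N(\mfp)^{-j}$ over $j$ and simplifying, each prime contributes
\[
f(\mfp):=\frac{N(\mfp)^h-hN(\mfp)+h-1}{(N(\mfp)-1)(N(\mfp)^h-1)} \;=\; \frac{1}{N(\mfp)}+\frac{N(\mfp)^h-hN(\mfp)^2+hN(\mfp)-1}{N(\mfp)(N(\mfp)-1)(N(\mfp)^h-1)};
\]
the leading $1/N(\mfp)$ is summed by the Mertens-type relation, the absolutely convergent remainder combines with $\mfa$ to produce precisely $\mfc_3$ of \eqref{def-C3}, and the uniform error from \eqref{hfreeidealcount} aggregates to $O(x/\log x)$ after a dyadic splitting over $N(\mfp)$.

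For the second moment, expand
\[
\Omega(\mfm)^2 = \sum_\mfp n_\mfp(\mfm)^2 + \sum_{\mfp_1\neq\mfp_2} n_{\mfp_1}(\mfm)\,n_{\mfp_2}(\mfm).
\]
The diagonal is handled just like the first moment but with weight $j^2$; after analogous simplification it contributes to the $\log\log x$ coefficient and to the constant $\mfc_3'$ of \eqref{def-C3'}. For the off-diagonal part, establish a two-prime-coprime version of the counting estimate, whose main term factors as $f(\mfp_1)f(\mfp_2)$, so the double sum over distinct primes equals $\big(\sum_\mfp f(\mfp)\big)^2 - \sum_\mfp f(\mfp)^2$. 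The first piece expands to $(\log\log x)^2 + 2\mfc_3\log\log x + \mfc_3^2$, plus a subtle correction arising from the actual region of summation $N(\mfp_1)^{j_1} N(\mfp_2)^{j_2} \le x$ (rather than the product region $N(\mfp_i) \le x$); an Abel-summation analysis of this correction, exploiting $\sum_{N(\mfp)\le x} (\log N(\mfp))^k/N(\mfp) \sim (\log x)^k/k$, produces exactly the constant $\mathfrak{B}$ of \eqref{B}: for $X=\mathbb{Q}$ it sums to $-\sum_{k\ge 1} 1/k^2 = -\pi^2/6$, while for $X=\{\mfq^z\}$ the analogous discrete sums yield $(\log\log\mfq)^2 - \pi^2/6$. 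Aggregating the diagonal, the off-diagonal, and the convergent $\sum_\mfp f(\mfp)^2$ matches the coefficient $\mfc_4$ of \eqref{def-C4}.

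\textbf{Main obstacle.} The most delicate step is the error analysis in the off-diagonal sum: a two-prime analog of the error in \eqref{hfreeidealcount} must be summed over pairs $(\mfp_1,\mfp_2)$ and multiplicities $(j_1,j_2)$ with $N(\mfp_1)^{j_1}N(\mfp_2)^{j_2} \le x$ without exceeding $O(x\log\log x/\log x)$, for which a dyadic splitting together with truncation at $N(\mfp_i) \le x^{1/h}$ (outside of which the counts vanish or are negligible) should suffice. A secondary task is the algebraic bookkeeping: verifying that the constants collected from the main-term expansions agree exactly with $\mfc_3$, $\mfc_3'$, $\mfc_4$ amounts to the new identities on sums over prime elements alluded to in the introduction.
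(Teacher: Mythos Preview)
Your proposal follows essentially the same approach as the paper: swap the order of summation, apply a coprime-to-$\mfp$ (resp.\ coprime-to-$\{\mfp,\mfq\}$) $h$-free count (this is \lmaref{hfreeidealrestrict}), simplify the resulting prime sums via the Mertens relation, and split the second moment into diagonal and off-diagonal pieces. The paper packages your ``subtle correction'' yielding $\mathfrak{B}$ as a quoted result (\lmaref{gsaidak}, which evaluates $\sum_{N(\mfp)N(\mfq)\le x}1/(N(\mfp)N(\mfq))$ directly), whereas you sketch its derivation via the Taylor expansion of $\log\log(x/N(\mfp))$; the two amount to the same computation.

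One slip: your suggested truncation at $N(\mfp_i)\le x^{1/h}$ in the off-diagonal error analysis is incorrect for $h$-free elements---primes of norm up to $x$ contribute (with multiplicity~$1$), so the counts do \emph{not} vanish outside that range; you are likely confusing this with the $h$-full case. The paper instead bounds the aggregate error without any truncation: with $R_{\mathcal{S}_h}(y)\ll y^\tau$ for some $\tau<1$ (\rmkref{remark1}), part~1 of \lmaref{boundnm} applied to the inner prime and then \lmaref{sumplogp} give
\[
x^\tau\sum_{N(\mfp)N(\mfq)\le x}\frac{1}{(N(\mfp)N(\mfq))^\tau}
\ \ll\ x\sum_{N(\mfp)\le x/2}\frac{1}{N(\mfp)\log(x/N(\mfp))}
\ \ll\ \frac{x\log\log x}{\log x}.
\]
This is the only point where your outline requires adjustment.
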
 

We define the constants (dependent on $h$)
\begin{equation}\label{def-D3}
    \mathfrak{D}_3 = h (\mfa - \log h) + \sum_\mfp \frac{h(N(\mfp)-N(\mfp)^{1-(1/h)}-N(\mfp)^{1/h} + 1)+N(\mfp)}{N(\mfp) (N(\mfp)^{1/h} - 1) (N(\mfp) - N(\mfp)^{1-(1/h)} +1)},
\end{equation}
\begin{small}
\begin{align}\label{def-D3'}
    & \mathfrak{D}_3' \notag \\
    & = h^2(\mfa - \log h) + \notag \\
    & \sum_\mfp \left( \frac{(2h^2 + 2h - 1)N(\mfp)^{\frac{1 + h}{h}} - (1 + h)^2N(\mfp)^{\frac{2 + h}{h}} - h^2(N(\mfp) - N(\mfp)^{\frac{1}{h}} + 2N(\mfp)^{\frac{2}{h}} - N(\mfp)^{\frac{3}{h}})}{N(\mfp)(-N(\mfp)^{\frac{1 + h}{h}} - N(\mfp)^{\frac{1}{h}} + N(\mfp))(-1 + N(\mfp)^{\frac{1}{h}})^2} \right),
\end{align}
\end{small}
and
\begin{equation}\label{def-D4}
    \mathfrak{D}_4 := \mathfrak{D}_3^2+ \mathfrak{D}_3' + h^2 \mathfrak{B} - \sum_\mfp \left( \frac{h(N(\mfp)^{\frac{1}{h}}-1)+1}{(N(\mfp)^{\frac{1}{h}}-1) (N(\mfp) - N(\mfp)^{1-\frac{1}{h}} +1)} \right)^2.
\end{equation}

For the distribution of $\Omega(\mfm)$ over $h$-full elements, we prove:
\begin{thm}\label{hfullOmega}
Let $\mathcal{P}, \mcm$, and $X$ satisfy the Condition \eqref{star}. Let $x \in X$ and $h \geq 2$ be an integer. Let $\mathcal{N}_h(x)$ be the set of $h$-full elements with norm $N(\cdot)$ less than or equal to $x$. Then, we have
\begin{align*}
\sum_{\substack{\mfm \in \mathcal{N}_h(x)}} \Omega(\mfm) & =   h \kappa \gamma_{h} x^{1/h} \log \log x + \mathfrak{D}_3 \kappa \gamma_{h} x^{1/h} + O_h \left( \frac{x^{1/h}}{\log x} \right),
\end{align*}
and
\begin{align*}
     \sum_{\substack{\mfm \in \mathcal{N}_h(x)}} \Omega^2(\mfm)
     & = h^2 \kappa \gamma_{h} x^{1/h} (\log \log x)^2 + (2\mathfrak{D}_3 + h) h \kappa \gamma_{h} x^{1/h} \log \log x + \mathfrak{D}_4 \kappa \gamma_{h} x^{1/h} \\
     & \hspace{.5cm} + O_h \left( \frac{x^{1/h} \log \log x}{\log x} \right),
\end{align*}
where $\gamma_h$ is defined in \eqref{gammahk}.
\end{thm}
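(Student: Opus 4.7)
The plan is to mirror the strategy used for $\omega(\mfm)$ over $h$-full elements in \cite[Theorem 1.3]{dkl5}, with the crucial modification that each prime $\mfp$ dividing an $h$-full $\mfm$ contributes its full multiplicity $n_\mfp(\mfm) \geq h$ to $\Omega(\mfm)$ rather than just $1$. This is what upgrades the leading coefficient from $1$ (in the $\omega$-case) to $h$ and forces the modified constants $\mathfrak{D}_3, \mathfrak{D}_3', \mathfrak{D}_4$.

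For the first moment, I would swap the order of summation to write
$$\sum_{\mfm \in \mathcal{N}_h(x)} \Omega(\mfm) = \sum_{\mfp} \sum_{k \geq h} k \cdot \#\bigl\{ \mfm \in \mathcal{N}_h(x) : n_\mfp(\mfm) = k \bigr\}.$$
Any $h$-full $\mfm$ with $n_\mfp(\mfm) = k \geq h$ factors uniquely as $\mfm = k\mfp + \mfm'$, where $\mfm'$ is an $h$-full element of the sub-monoid generated by $\mcp \setminus \{\mfp\}$ with $N(\mfm') \leq x/N(\mfp)^k$. Applying the $h$-full asymptotic $|\mathcal{N}_h(y)| = \kappa \gamma_h y^{1/h} + O_h(R_{\mathcal{N}_h}(y))$ after stripping the local Euler factor at $\mfp$ from $\gamma_h$, then summing the resulting geometric series in $k$ and reorganizing, one extracts a main term proportional to $\sum_{\mfp, N(\mfp)\leq x^{1/h}} h/N(\mfp)$ together with lower-order Euler-type sums that assemble into $\mathfrak{D}_3$. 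The Mertens-style identity $\sum_{N(\mfp) \leq y} 1/N(\mfp) = \log \log y + \mfa + O(1/\log y)$ evaluated at $y = x^{1/h}$ produces the $h \log \log x$ leading term and, since $\log \log x^{1/h} = \log \log x - \log h$, contributes the $h(\mfa - \log h)$ sitting inside \eqref{def-D3}.

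For the second moment, I would expand
$$\Omega^2(\mfm) = \sum_\mfp n_\mfp(\mfm)^2 + \sum_{\mfp_1 \neq \mfp_2} n_{\mfp_1}(\mfm)\, n_{\mfp_2}(\mfm).$$
The diagonal sum is handled exactly as in the first moment, but with an inner geometric series in $k^2$ rather than $k$; this yields the $h \log \log x$ piece inside the coefficient $2\mathfrak{D}_3 + h$ and a constant piece contributing to $\mathfrak{D}_4$. The off-diagonal sum is dealt with by writing $\mfm = k_1 \mfp_1 + k_2 \mfp_2 + \mfm''$ with $\mfm''$ an $h$-full element avoiding both $\mfp_1, \mfp_2$, applying the same $h$-full asymptotic with two local Euler factors stripped, and then invoking the bivariate Mertens identity
$$\Bigl(\sum_{N(\mfp) \leq y} \frac{1}{N(\mfp)}\Bigr)^2 = (\log \log y)^2 + 2 \mfa \log \log y + \mfa^2 + \mathfrak{B} + O(1/\log y),$$
where the secondary constant $\mathfrak{B}$ from \eqref{B} encodes the $X$-dependence exactly as in \cite{liuturan}. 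Multiplying by $h^2$ and subtracting the contribution from $\mfp_1 = \mfp_2$ (which is absorbed into the diagonal and produces the negative Euler-product sum in \eqref{def-D4}) delivers the $h^2 (\log \log x)^2$ term, the $2h\mathfrak{D}_3 \log \log x$ piece of $(2\mathfrak{D}_3 + h)h$, and assembles $\mathfrak{D}_4$.

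The main obstacles I expect are: (i) establishing the bivariate Mertens identity with the explicit secondary constant $\mathfrak{B}$, which requires upgrading the first-moment analysis of \cite[Lemma~2]{liuturan} to a second-moment estimate and may be one of the ``new identities involving sums over prime elements'' promised in the introduction; (ii) controlling the error $R_{\mathcal{N}_h}(x/N(\mfp)^k)$ uniformly, particularly for primes with $N(\mfp)$ approaching $x^{1/h}$ where the target error $O_h(x^{1/h}/\log x)$ is tight—this will demand splitting the $\mfp$-sum at a threshold such as $x^{1/(2h)}$ and arguing separately in the tail; and (iii) the algebraic consolidation of the resulting geometric series and Euler products into the precise closed forms \eqref{def-D3}, \eqref{def-D3'}, \eqref{def-D4}, which is routine but tedious bookkeeping. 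Once the second-moment Mertens identity and the uniform error control are in hand, the rest of the proof is a natural extension of the first-moment computation.
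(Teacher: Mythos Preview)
Your approach is essentially identical to the paper's. Two clarifications: (1) the bivariate Mertens identity you need is the \emph{hyperbola} sum $\sum_{N(\mfp)N(\mfq)\leq y}\frac{1}{N(\mfp)N(\mfq)}$ (\lmaref{gsaidak}, already established in \cite{dkl5}), not the square $\bigl(\sum_{N(\mfp)\leq y}1/N(\mfp)\bigr)^2$ that you wrote---the square carries no $\mathfrak{B}$ term; (2) the genuinely new prime-sum identity in this paper is not the bivariate Mertens but \lmaref{sumplogx/N(mfp)^2}, which gives $\sum_{N(\mfp)\leq x/2}\frac{1}{N(\mfp)\log^2(x/N(\mfp))}=O(1/\log x)$ and is used in the off-diagonal second-moment sum to control the cross-error where \emph{both} primes contribute truncation tails from the finite inner geometric series. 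No threshold splitting at $x^{1/(2h)}$ is needed for the $R_{\mathcal{N}_h}$ contribution; it is absorbed directly via $R_{\mathcal{N}_h}(y)\ll y^{\upsilon/h}$ with $\upsilon<1$ together with part~1 of \lmaref{boundnm}.
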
 

To establish \thmref{hfreeOmega} and \thmref{hfullOmega}, we adapt the novel counting method introduced in \cite{dkl2} and subsequently utilized in \cite{dkl5} and \cite{dkl4}. This approach is based on decomposing the sum over elements of the monoid into sums indexed by prime elements of bounded norm. For each such prime element, the corresponding summand involves a sum over monoid elements with positive multiplicity at that prime. While the underlying strategy remains largely the same, the analysis of $\Omega(\mfm)$ necessitates additional refinements to account for the contribution of prime multiplicities. In particular, we make use of the expressions \eqref{sumkp^k/h} and \eqref{need-k^2} to isolate the main terms and effectively control the error terms in the distribution of $\Omega(\mfm)$ over $h$-full elements. Additionally, we derive a new identity for sums over prime elements, given in \lmaref{sumplogx/N(mfp)^2}, which plays a key role in bounding the error terms arising in our analysis.

We also note that, in the case $\mcm = \mathbb{N}$, \thmref{hfreeOmega} and \thmref{hfullOmega} refine the results of \cite[Theorems 1 \& 2]{jala} by explicitly determining the coefficient of $x$ in the former and improving the error term in the latter. These enhancements were also discussed in our earlier work \cite[Theorems 1.4 \& 1.5]{dkl2}\footnote{We take this opportunity to correct a mistake in \cite[Theorems 1.4 \& 1.5]{dkl2}: the constants $C_4$ in Theorem 1.4 and $B_4$ in Theorem 1.5 should be replaced by the corrected constants $\mathfrak{C}_4$ and $\mathfrak{D}_4$, respectively, as presented in the current article.}.
    
We now proceed to demonstrate that \thmref{hfreeOmega} and \thmref{hfullOmega} are applicable to a variety of abelian monoids previously studied in the literature. Several illustrative applications are listed below. For additional details regarding the structure of these monoids and certain key properties, such as identity \eqref{star}, we refer the readers to \cite[Section 5]{dkl5}.

\begin{app}\textbf{For $\boldsymbol{h}$-free and $\boldsymbol{h}$-full ideals in number fields.}
    Let $K/\mathbb{Q}$ be a number field of degree $n_K = [K: \mathbb{Q}]$ and $\mathcal{O}_K$ be its ring of integers. Let $\mathcal{P}$ be the set of prime ideals of $\mathcal{O}_K$ and $\mathcal{M}$ be the set of ideals of $\mathcal{O}_K$. Let the norm map be $N: \mathcal{M} \rightarrow \mathbb{N}$ be the standard norm map, i.e., $\mathfrak{m} \mapsto N(\mathfrak{m}) := |\mathcal{O}_K/ \mathfrak{m}|$. Let $X = \mathbb{Q}$.

Landau in \cite[Satz 210]{Landau} proved that
\begin{equation*}
    \sum_{\substack{\mfm \in \mcm \\ N(\mfm) \leq x}} 1 = \kappa_K x + O \left( x^{1 - \frac{2}{n_{\scaleto{K}{3pt}}+1}}\right),
\end{equation*}
where $\kappa_K$ is a field invariant, and which satisfies Condition \eqref{star} with $\kappa = \kappa_K$ and $\theta = 1 - \frac{2}{n_{K}+1}$. Thus, \thmref{hfreeOmega} and \thmref{hfullOmega} give the distribution of $\Omega(\mfm)$ over $h$-free and $h$-full ideals.
\end{app}
\begin{rmk}
    For the special case $K = \mathbb{Q}$, we have $n_K=1$ and $\kappa_K = 1$. Consequently, our results yield the distribution of $\Omega(n)$ over $h$-free and $h$-full numbers, thereby refining \cite[Theorems 1 \& 2]{jala}, as previously discussed.
\end{rmk}

\begin{app}\textbf{For $\boldsymbol{h}$-free and $\boldsymbol{h}$-full effective divisors in global function fields.}

    Let $q$ be a prime power and $\mathbb{F}_q$ be the finite field with $q$ elements. Let $K/\mathbb{F}_q$ be a global function field. Let $G_K$ be its genus and $C_K$ be its class number. A prime $\mfp$ in $K$ is a discrete valuation ring $R$ with maximal ideal $P$ such that $P \subset R$ and the quotient field of $R$ is $K$. The degree of $\mfp$, denoted as $\deg \mfp$, is defined as the dimension of $R/P$ over $\mathbb{F}_q$, which is finite. Let $\mcp$ be the set of all primes in $K$. Let $\mcm$ be the free abelian monoid generated by $\mcp$. Let the norm map $N: \mathcal{M} \rightarrow \mathbb{N}$ be the $q$-power map defined as $\mathfrak{m} \mapsto N(\mfm) := q^{\deg \mathfrak{m}}$. Let $X = \{ q^z : z \in \mathbb{Z} \}$. 

By \cite[Lemma 5.8 \& Corollary 4 to Theorem 5.4]{mr}, for sufficiently large $n$, we obtain
$$\sum_{N(\mfm) \leq q^n} 1 = \frac{C_K}{q^{G_K}} \left( \frac{q}{q-1} \right)^2 q^n + O(n).$$
This satisfies Condition \eqref{star} with $\kappa = \frac{C_K}{q^{G_K}} \left( \frac{q}{q-1} \right)^2$ and $\theta = \epsilon$ for any $\epsilon \in (0,1)$. Thus, \thmref{hfreeOmega} and \thmref{hfullOmega} respectively give the distribution of $\Omega(\mfm)$ over $h$-free and $h$-full effective divisors in a global function field.
\end{app}
\begin{rmk}
For the special case when $K = \mathbb{F}_q(x)$, whose genus and class number are 0 and 1 respectively, we can consider the abelian monoid $A = \mathbb{F}_q[x]$, the ring of monic polynomials in one variable over $\mathbb{F}_q$. Thus, we obtain the distribution of $\Omega(\mfm)$ over $h$-free and $h$-full polynomials over finite fields, which will be equivalent to the ones studied by Lal\'in and Zhang \cite[Theorems 1.1 \& 1.4]{lz}. 
\end{rmk}

\begin{app}\textbf{For $\boldsymbol{h}$-free and $\boldsymbol{h}$-full effective \texorpdfstring{$\boldsymbol{0}$}{}-cycles in geometrically irreducible projective varieties of dimension \texorpdfstring{$\boldsymbol{r}$}{}.}

Let $q$ be a prime power and $\mathbb{F}_q$ be the finite field with $q$ elements. Let $r$ be a positive integer. Let $V/\mathbb{F}_q$ be a geometrically irreducible projective variety of dimension $r$. Let $\mcp$ be the set of closed points of $V/\mathbb{F}_q$, which is in bijection with the set of orbits of $V/\mathbb{F}_q$ under the action of $\textnormal{Gal}(\bar{\mathbb{F}}_q/\mathbb{F}_q)$ (see \cite[Proposition 6.9]{lorenzini}). For each $\mfp \in \mcp$, we define the degree of $\mfp$, $\deg \mfp$, to be the length of the corresponding orbit. Let $\mcm$ be the free abelian monoid generated by $\mcp$. Let the norm map $N: \mathcal{M} \rightarrow \mathbb{N}$ be the $q^r$-power map defined as $\mathfrak{m} \mapsto N(\mfm) := q^{r \deg \mathfrak{m}}$. Let $X = \{ q^{rz} : z \in \mathbb{Z} \}$. 

In \cite[Remark 1 of Section 4]{liuturan}, the third author proved that
$$\sum_{N(\mfm) \leq q^{rn}} 1 = \kappa' \left( \frac{q^r}{q^r-1} \right) q^{rn} + O_r \left( n \cdot q^{(r-1)n} \right),$$
where $\kappa'$ is some positive constant defined explicitly in \cite[Lemma 7 of Section 4]{liuturan}. This satisfies Condition \eqref{star} with $\kappa = \kappa' \left( \frac{q^r}{q^r-1} \right)$ and $\theta = \epsilon$ for any $\epsilon \in (1-1/r,1)$. Thus, \thmref{hfreeOmega} and \thmref{hfullOmega} respectively give the distribution of $\Omega(\mfm)$ over $h$-free and $h$-full effective 0-cycles in a geometrically irreducible projective variety of dimension $r$.
\end{app}

In \cite{hardyram}, Hardy and Ramanujan strengthened the classical distribution result for $\omega(n)$ by proving that $\omega(n)$ has the normal order $\log \log n$ over natural numbers. In \cite[Section 22.11]{hw}, one can find another proof of this result using the variance of $\omega(n)$ (see \cite[(22.11.7)]{hw}). In \cite[Corollary 1]{liuturan}, using this method of variance, the third author provides an analog of this result over any abelian monoid $\mathcal{M}$. 
Moreover, in \cite{dkl5}, the authors showed that $\omega(\mfm)$ has normal order $\log \log N(\mfm)$ over the set of $h$-free elements and over the set of $h$-full elements. In this work, we extend the normal order results over these subsets to $\Omega(\mfm)$.

First, we define the normal order of a function over a subset of $\mathcal{M}$. Let $\mathfrak{S} \subseteq \mathcal{M}$ and $\mathfrak{S}(x)$ denote the set of elements belonging to $\mathfrak{S}$ and with norm $N(\cdot)$ less than or equal to $x$. Let $|\mathfrak{S}(x)|$ denote the cardinality of $\mathfrak{S}(x)$. Let $f, F : \mathfrak{S} \rightarrow \mathbb{R}_{\geq 0}$ be two functions such that $F$ is non-decreasing, i.e., if $\mathfrak{m}_1, \mathfrak{m}_2 \in \mathcal{M}$ with $N(\mfm_1) \leq N(\mfm_2)$, then $F(\mfm_1) \leq F(\mfm_2)$. Then, $f(\mathfrak{m})$ is said to have normal order $F(\mathfrak{m})$ over $\mathfrak{S}$ if for any $\epsilon > 0$, the number of $\mathfrak{m} \in \mathfrak{S}(x)$ that do not satisfy the inequality
$$(1-\epsilon) F(\mathfrak{m}) \leq f(\mathfrak{m}) \leq (1+ \epsilon) F(\mathfrak{m})$$
is $o(|\mathfrak{S}(x)|)$ as $x \rightarrow \infty$. 

The set of $h$-free elements has a positive density $1/\zeta_\mcm(h)$ in $\mathcal{M}$. Thus, the proof of normal order of $\Omega(\mathfrak{m})$ over $h$-free elements being $\log \log N(\mathfrak{m})$ follows from the classical case. In particular, one can establish the following:
\begin{cor}
For any $\epsilon > 0$, the number of $\mathfrak{m} \in \mathcal{S}_h(x)$ that do not satisfy the inequality
$$(1-\epsilon) \log \log N(\mathfrak{m}) \leq \Omega(\mathfrak{m}) \leq (1+ \epsilon) \log \log N(\mathfrak{m})$$
is $o(|\mathcal{S}_h(x)|)$ as $x \rightarrow \infty$.
\end{cor}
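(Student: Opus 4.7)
The plan is to prove this corollary by the classical Tur\'an variance argument, using the first and second moments of $\Omega(\mfm)$ over $\mathcal{S}_h(x)$ already established in \thmref{hfreeOmega} together with the counting formula \eqref{hfreeidealcount}. First I would form the second moment of the deviation
\[
V(x) := \sum_{\mfm \in \mathcal{S}_h(x)} \bigl(\Omega(\mfm) - \log \log x\bigr)^2
= \sum_{\mfm \in \mathcal{S}_h(x)} \Omega^2(\mfm) - 2\log \log x \sum_{\mfm \in \mathcal{S}_h(x)} \Omega(\mfm) + (\log \log x)^2 \, |\mathcal{S}_h(x)|.
\]
Substituting the three asymptotic expansions, the coefficient $\kappa/\zeta_\mcm(h)$ in front of $x(\log\log x)^2$ appears with signs $+1,-2,+1$ and therefore cancels; the coefficient in front of $x\log\log x$ likewise reduces from $(2\mfc_3+1)-2\mfc_3=1$, and combined with the error terms of order $x/\log x$, $x(\log\log x)/\log x$, and $x^\tau(\log\log x)^2$ from \rmkref{remark1}, this yields the key bound $V(x) \ll_h x \log \log x$.

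Next I would translate the bound on $V(x)$, which involves $\log\log x$, into a statement about $\log \log N(\mfm)$. Split $\mathcal{S}_h(x)$ into the elements with $N(\mfm) \leq x/\log x$ and those with $N(\mfm) > x/\log x$. By \eqref{hfreeidealcount} the first class has cardinality $O(x/\log x) = o(|\mathcal{S}_h(x)|)$ and can be discarded. For the second class, a direct computation gives $\log \log N(\mfm) = \log \log x + O(1/\log x)$, so that $|\log \log N(\mfm) - \log \log x|$ is $o(\log \log N(\mfm))$. Consequently, if $\mfm$ violates
\[
(1-\epsilon)\log \log N(\mfm) \leq \Omega(\mfm) \leq (1+\epsilon)\log \log N(\mfm),
\]
then for all sufficiently large $x$ it must satisfy $|\Omega(\mfm) - \log \log x| \geq (\epsilon/2) \log \log x$.

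Finally, I would apply Chebyshev's inequality: the number of such $\mfm$ in the second class is bounded by
\[
\frac{V(x)}{(\epsilon/2)^2 (\log \log x)^2} \ll_{h,\epsilon} \frac{x}{\log \log x},
\]
which is $o(x)$ and hence $o(|\mathcal{S}_h(x)|)$ since $|\mathcal{S}_h(x)| \sim (\kappa/\zeta_\mcm(h))\, x$. Combining with the negligible first class completes the proof.

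The argument is entirely routine once the two moments of \thmref{hfreeOmega} are in hand; the only mild technical point to keep track of is the passage from $\log \log x$ to $\log \log N(\mfm)$, and even this is essentially cosmetic because $\mathcal{S}_h$ has positive density in $\mcm$, so trimming the sparse tail $N(\mfm) \leq x/\log x$ loses nothing. Accordingly, I do not expect a genuine obstacle; the main task is simply to verify the cancellations in $V(x)$ arise cleanly from the coefficients $\mfc_3$ and $\mfc_4$ as displayed in \thmref{hfreeOmega}.
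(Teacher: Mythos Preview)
Your argument is correct: the Tur\'an variance computation works exactly as you describe, the cancellations in $V(x)$ go through, and the Chebyshev step finishes the job. (One tiny slip: for $N(\mfm) > x/\log x$ the discrepancy is $\log\log N(\mfm) = \log\log x + O(\log\log x/\log x)$, not $O(1/\log x)$; this is still $o(1)$ and changes nothing.)

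The paper, however, does not go through the variance at all. Its entire argument is the observation preceding the corollary: since $\mathcal{S}_h$ has positive density $1/\zeta_\mcm(h)$ in $\mcm$, the exceptional set for $\Omega$ inside $\mathcal{S}_h(x)$ is contained in the exceptional set inside $\mcm(x)$, which by the classical normal-order result over $\mcm$ (as in \cite{liuturan}) is already $o(|\mcm(x)|)=o(x)=o(|\mathcal{S}_h(x)|)$. So the paper never invokes \thmref{hfreeOmega} for this corollary. Your route is self-contained within the $h$-free setting and actually \emph{uses} the main theorem you just proved, at the price of a short explicit computation; the paper's route is a one-line reduction but depends on importing the classical result over the full monoid. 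Both are perfectly valid; yours has the advantage that the same template works verbatim for the $h$-full corollary, where the positive-density shortcut is unavailable.
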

On the other hand, the set of $h$-full elements has density zero in $\mathcal{M}$ and thus does not follow directly as the previous two result. However, writing an $h$-full element $\mathfrak{m}$ as $\mathfrak{m} = h\mathfrak{r}_0 + \mathfrak{r}_1$ where $\mathfrak{r}_0$ is the sum of all distinct prime element factors $\mfp$ of $\mathfrak{m}$ with $n_\mfp(\mfm) = h$, one can use the classical result on $\Omega(\mathfrak{r}_0)$ to establish the normal order of $\Omega(\mathfrak{m})$ over $h$-full elements. Additionally, working similarly to \cite[Proof of Theorem 1.3]{dkl2}, using the method of variance, we can establish the following result: 
\begin{cor}
For any $\epsilon > 0$, the number of $\mathfrak{m} \in \mathcal{N}_h(x)$ that do not satisfy the inequality
$$(1-\epsilon) h \log \log N(\mathfrak{m}) \leq \Omega(\mathfrak{m}) \leq (1+ \epsilon) h \log \log N(\mathfrak{m})$$
is $o(|\mathcal{N}_h(x)|)$ as $x \rightarrow \infty$.
\end{cor}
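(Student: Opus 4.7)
The plan is to follow Tur\'an's method of variance, adapted to the monoid setting exactly as in \cite[Proof of Theorem 1.3]{dkl2}. Set $\alpha(x) := h \log \log x$. The key step is to estimate
$$V(x) := \sum_{\mfm \in \mathcal{N}_h(x)} \bigl( \Omega(\mfm) - \alpha(x) \bigr)^2 = \sum_{\mfm \in \mathcal{N}_h(x)} \Omega^2(\mfm) - 2 \alpha(x) \sum_{\mfm \in \mathcal{N}_h(x)} \Omega(\mfm) + \alpha(x)^2 |\mathcal{N}_h(x)|.$$
Substituting the two asymptotic formulas from \thmref{hfullOmega} together with the $h$-full counting asymptotic $|\mathcal{N}_h(x)| = \kappa \gamma_h x^{1/h} + O_h(R_{\mathcal{N}_h}(x))$ recalled before \eqref{E2(x)}, the three $x^{1/h}(\log\log x)^2$-contributions carry coefficients $h^2$, $-2h^2$, $h^2$ and cancel, while the $x^{1/h}\log\log x$-contributions collapse to $h^2 \kappa \gamma_h x^{1/h} \log\log x$ (the $\mathfrak{D}_3$-piece of the second moment is exactly annihilated by the $2 h \mathfrak{D}_3$-piece of the cross term). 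All remaining contributions are $O_h(x^{1/h})$, so
$$V(x) = h^2 \kappa \gamma_h x^{1/h} \log\log x + O_h\bigl( x^{1/h} \bigr).$$

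Chebyshev's inequality (Markov applied to the square) then yields, for any fixed $\delta > 0$,
$$\#\bigl\{ \mfm \in \mathcal{N}_h(x) : |\Omega(\mfm) - \alpha(x)| > \delta \alpha(x) \bigr\} \leq \frac{V(x)}{\delta^2 \alpha(x)^2} \ll_{\delta,h} \frac{x^{1/h}}{\log\log x},$$
which is $o(|\mathcal{N}_h(x)|)$ since $|\mathcal{N}_h(x)| \asymp x^{1/h}$.

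To pass from $\alpha(x) = h \log\log x$ to $h \log\log N(\mfm)$ as stated in the corollary, I would discard the sub-collection of $\mfm \in \mathcal{N}_h(x)$ with $N(\mfm) \leq x^{1/2}$; its cardinality is $|\mathcal{N}_h(x^{1/2})| = O_h\bigl( x^{1/(2h)} \bigr) = o(|\mathcal{N}_h(x)|)$. For every remaining element, $\log\log N(\mfm) = \log\log x + O(1)$, so $\log\log N(\mfm)/\log\log x \to 1$ uniformly in such $\mfm$ as $x \to \infty$. Choosing $\delta$ small in terms of $\epsilon$ then converts the deviation bound around $\alpha(x)$ into the desired two-sided deviation bound around $h \log\log N(\mfm)$. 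The only non-mechanical step is checking the cancellations in $V(x)$: it is precisely the sharp form of \thmref{hfullOmega} -- and in particular the specific value of $\mathfrak{D}_4$ -- that forces the variance to have order $x^{1/h}\log\log x$ rather than the naive $x^{1/h}(\log\log x)^2$ that would make the Chebyshev argument fail.
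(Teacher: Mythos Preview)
Your proposal is correct and follows exactly the variance method the paper indicates (working as in \cite[Proof of Theorem~1.3]{dkl2}). One small inaccuracy in your closing commentary: the specific value of $\mathfrak{D}_4$ plays no role in reducing $V(x)$ from order $x^{1/h}(\log\log x)^2$ to order $x^{1/h}\log\log x$ --- that cancellation is driven entirely by the leading coefficients and the $\mathfrak{D}_3$-terms in \thmref{hfullOmega}, while $\mathfrak{D}_4$ only contributes to the $O_h(x^{1/h})$ remainder.
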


In 1940, Erd\H{o}s and Kac \cite{ErdosKac} showed that for $\mcm = \mathbb{N}$, the properly normalized function $\omega(\mfm)$ converges in distribution to the standard normal law. In \cite{liu}, the third author generalized this result to all abelian monoids considered within this framework. This line of research was further advanced in \cite{dkl6}, where the authors established Gaussian distribution results for both $\omega(\mfm)$ and $\Omega(\mfm)$ over the subsets of $h$-free and $h$-full elements. Notably, the article also introduced a unified approach for studying a broad class of additive functions.

\section{Essential Lemmas}

In this section, we list all the lemmas required to study the first and the second moment of $\Omega(\mfm)$ over $h$-free and over $h$-full elements. First, we recall the following results from \cite{dkl5} regarding sums over prime elements necessary for the study:
\begin{lma}\cite[Lemma 2.1]{dkl5}\label{gpnt}
Let $\mathcal{P}, \mcm$, and $X$ satisfy the Condition \eqref{star}. Let $x \in X$. Then
\begin{equation*}
    \Pi(x) := \sum\limits_{\substack{\mfp \in \mcp \\ N(\mfp) \leq x}} 1 = O \left(\frac{x}{\log x} \right).
\end{equation*}
\end{lma}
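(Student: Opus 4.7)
The plan is to first establish the Chebyshev-type upper bound $\theta_\mcm(x) := \sum_{\mfp \in \mcp,\, N(\mfp) \leq x} \log N(\mfp) = O(x)$, and then recover the stated bound on $\Pi(x)$ by partial summation:
$$\Pi(x) = \frac{\theta_\mcm(x)}{\log x} + \int_2^x \frac{\theta_\mcm(t)}{t (\log t)^2}\, dt = O\!\left(\frac{x}{\log x}\right).$$
The first term is $O(x/\log x)$ as soon as $\theta_\mcm(x) = O(x)$, and the integral is $O(x/\log^2 x)$ by the same input, so the reduction is clean.

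To bound $\theta_\mcm(x)$, I would introduce the monoid von Mangoldt function $\Lambda(\mfd)$, defined to equal $\log N(\mfp)$ if $\mfd = k\mfp$ for some $k \geq 1$ and $\mfp \in \mcp$, and zero otherwise. Unique prime factorization in $\mcm$ gives the identity $\log N(\mfm) = \sum_{\mfd \mid \mfm} \Lambda(\mfd)$, where $\mfd \mid \mfm$ means $\mfm - \mfd \in \mcm$. Summing over $\mfm$ with $N(\mfm) \leq x$ and swapping the order of summation yields the key convolution identity
$$\sum_{\substack{\mfm \in \mcm \\ N(\mfm) \leq x}} \log N(\mfm) \;=\; \sum_{\substack{\mfd \in \mcm \\ N(\mfd) \leq x}} \Lambda(\mfd)\, \mcm\!\left(x/N(\mfd)\right).$$
Using Condition \eqref{star} and partial summation, the left-hand side evaluates to $\kappa x \log x + O(x)$. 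Substituting $\mcm(y) = \kappa y + O(y^\theta)$ on the right and separating the contribution of proper prime powers (which is $O(\sqrt{x}\,\log x)$ since all but $O(\sqrt{x})$ of the $\mfd$'s are primes) should then yield the Chebyshev bound $\psi_\mcm(x) := \sum_{N(\mfd) \leq x} \Lambda(\mfd) = O(x)$, and hence $\theta_\mcm(x) = O(x)$.

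The main obstacle lies in controlling the error coming from the $O(y^\theta)$ term: a naïve bound produces an error of order $O(x \log x)$, comparable to the $\kappa x \log x$ main term. This is overcome by a Tauberian-style argument exploiting the meromorphic continuation of $\zeta_\mcm(s)$, defined in \eqref{gzeta}, to the half-plane $\Re(s) > \theta$ with a simple pole only at $s = 1$ (a direct consequence of \eqref{star} via Mellin transform). Equivalently, since Condition \eqref{star} is precisely Axiom~A in Knopfmacher's abstract analytic number theory framework, the Chebyshev-type bound $\psi_\mcm(x) = O(x)$ can be invoked directly from \cite[Chapter 4]{jk2}. Either route converts the convolution identity into the desired estimate.
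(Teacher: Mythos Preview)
The paper does not prove this lemma; it is quoted verbatim from the authors' earlier work \cite[Lemma~2.1]{dkl5}, and no argument is supplied here. So there is no in-paper proof to compare against.

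Your outline is a sound reconstruction of the standard route. The reduction from $\theta_\mcm(x)=O(x)$ to $\Pi(x)=O(x/\log x)$ by partial summation is correct, and the convolution identity $\sum_{N(\mfm)\le x}\log N(\mfm)=\sum_{N(\mfd)\le x}\Lambda(\mfd)\,\mcm(x/N(\mfd))$ is the right starting point for a Chebyshev-type bound in this setting. You also correctly flag the obstacle: bounding the $O((x/N(\mfd))^\theta)$ contributions na\"ively costs a factor of $\log x$, and you rightly note that Condition~\eqref{star} is exactly Knopfmacher's Axiom~A, so the Chebyshev estimate $\psi_\mcm(x)=O(x)$ is available from \cite{jk2}. (Your side remark that the higher prime powers contribute $O(\sqrt{x}\,\log x)$ needs the crude input $\Pi(y)\le \mcm(y)=O(y)$ and actually gives $O(x^{1/2}(\log x)^2)$, but this is harmless.)

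One structural comment: once you resolve the obstacle by invoking \cite{jk2}, the convolution-identity setup becomes superfluous --- Knopfmacher already delivers $\psi_\mcm(x)=O(x)$ directly, and your proof collapses to that citation together with the partial-summation step. In effect your proposal, like the paper, ultimately defers the substantive estimate to the literature; that is entirely appropriate here, since the lemma is being quoted as a known result rather than proved afresh.
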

\begin{lma}\cite[Lemma 2.2, Parts (1),(3),(4) \& (6)]{dkl5}\label{boundnm}
   Let $\mathcal{P}, \mcm$, and $X$ satisfy the Condition \eqref{star}. Let $x \in X$ and $\alpha$ be a real number. We have
   \begin{enumerate}
       \item If $0 \leq \alpha < 1$, 
       $$\sum_{\substack{\mfp \in \mathcal{P} \\ N(\mfp) \leq x}} \frac{1}{N(\mfp)^\alpha} = O_\alpha \left( \frac{x^{1- \alpha}}{\log x} \right).$$
       \item If $\alpha > 1$, then
       $$\sum_{\substack{\mfp \in \mathcal{P} \\ N(\mfp) \leq x}} \frac{1}{N(\mfp)^\alpha} = O_\alpha (1).$$
       \item As a generalization of Mertens' theorem, we have
       $$\sum_{\substack{\mfp \in \mathcal{P} \\ N(\mfp) \leq x}} \frac{1}{N(\mfp)} = \log \log x + \mfa + O \left( \frac{1}{\log x} \right),$$
       where $\mfa$ some constant that depends only on $\mcp$.
       \item If $\alpha > 1$, then
       $$\sum_{\substack{\mfp \in \mathcal{P} \\ N(\mfp) \geq x}} \frac{1}{N(\mfp)^\alpha} = O \left( \frac{1}{(\alpha-1)x^{\alpha-1} (\log x)} \right).$$
   \end{enumerate}
\end{lma}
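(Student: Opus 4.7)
The plan is to derive all four estimates by Abel (partial) summation applied to the prime-counting function $\Pi(t)$, using the Chebyshev-type bound $\Pi(t) \ll t/\log t$ from \lmaref{gpnt} as the essential input.

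For part (1), I would write, with $c$ a fixed cutoff below the smallest prime norm,
$$\sum_{\substack{\mfp \in \mcp \\ N(\mfp) \leq x}} \frac{1}{N(\mfp)^\alpha} = \frac{\Pi(x)}{x^\alpha} + \alpha \int_{c}^{x} \frac{\Pi(t)}{t^{\alpha+1}}\, dt.$$
The boundary term is $O(x^{1-\alpha}/\log x)$ by \lmaref{gpnt}. For the integral the integrand is $\ll 1/(t^\alpha \log t)$; splitting the range at $\sqrt{x}$ and using $\log t \geq \tfrac{1}{2}\log x$ on $[\sqrt{x},x]$ produces a total contribution $\ll x^{(1-\alpha)/2} + x^{1-\alpha}/\log x \ll x^{1-\alpha}/\log x$, which matches the claim. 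Part (2) then follows by letting $x\to\infty$ in the same identity: for $\alpha > 1$ the integral $\int_c^\infty dt/(t^\alpha \log t)$ converges, so $\sum_\mfp N(\mfp)^{-\alpha}$ is bounded by a constant depending only on $\alpha$.

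Part (4) is the dual tail estimate. Sending $y\to\infty$ in partial summation over $[x,y]$, and using $\Pi(y)/y^\alpha \to 0$ for $\alpha > 1$, one obtains
$$\sum_{\substack{\mfp \in \mcp \\ N(\mfp) \geq x}} \frac{1}{N(\mfp)^\alpha} \ll \frac{\Pi(x)}{x^\alpha} + \alpha \int_x^\infty \frac{\Pi(t)}{t^{\alpha+1}}\, dt.$$
The boundary piece is $O(1/(x^{\alpha-1}\log x))$, and on $[x,\infty)$ the uniform lower bound $\log t \geq \log x$ gives $\int_x^\infty dt/(t^\alpha \log t) \leq (1/\log x)\cdot 1/((\alpha-1)x^{\alpha-1})$, yielding the stated bound. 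Part (3), the Mertens-type asymptotic, does not follow from the Chebyshev bound alone: partial summation only yields $\sum_{N(\mfp)\le x} 1/N(\mfp) \ll 1$, not the sharper constant and $O(1/\log x)$ saving. For this I would use the analytic input from $\zeta_\mcm(s)$. By Condition \eqref{star} and a standard contour argument, $\zeta_\mcm(s)$ has a simple pole at $s=1$ with residue $\kappa$, and taking the logarithm of the Euler product \eqref{gzeta} gives $\sum_\mfp N(\mfp)^{-s} = -\log(s-1) + O(1)$ as $s\to 1^{+}$, with the $O(1)$ piece coming from an absolutely convergent series on $\Re(s) > 1/2$. A Mertens-style manipulation, as in \cite{liuturan} and \cite{dkl5}, converts this into the claimed partial-sum asymptotic, and the $O(1/\log x)$ saving traces back to the explicit error term in Condition \eqref{star}.

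The main obstacle is part (3): while (1), (2), and (4) are mechanical consequences of \lmaref{gpnt} and routine integral estimates, the explicit constant $\mfa$ and the logarithmic saving in the error term require analytic control of $\zeta_\mcm(s)$ near $s=1$ and a careful subtraction of the principal singularity before integration. This is precisely the step where Condition \eqref{star}, rather than the weaker bound $\mcm(x) \ll x$, becomes indispensable.
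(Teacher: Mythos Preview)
The paper does not prove this lemma; it is quoted from \cite[Lemma 2.2]{dkl5} (with part (3) ultimately going back to \cite[Lemma 2]{liuturan}), so there is no in-paper argument to compare against. Your sketch for parts (1), (2), and (4) via Abel summation from \lmaref{gpnt} is the standard route and is correct as written.

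One small slip in your discussion of part (3): partial summation from $\Pi(t)\ll t/\log t$ yields $\sum_{N(\mfp)\le x}1/N(\mfp)\ll\log\log x$, not $\ll 1$ as you wrote, since $\int_2^x dt/(t\log t)=\log\log x+O(1)$. Your substantive point is still right, though: the bare Chebyshev bound cannot produce the constant $\mfa$ or the $O(1/\log x)$ saving. The route actually used in \cite{liuturan} is the elementary Mertens one---first derive $\sum_{N(\mfp)\le x}(\log N(\mfp))/N(\mfp)=\log x+O(1)$ from Condition \eqref{star}, then Abel-sum once more---rather than the complex-analytic approach via the pole of $\zeta_\mcm(s)$ that you propose. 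Your route would also work, but it is heavier than necessary and you have not spelled out how to pass from the behaviour of $\sum_\mfp N(\mfp)^{-s}$ as $s\to 1^+$ to the partial-sum asymptotic with the sharp $O(1/\log x)$ error; that Tauberian step is exactly where the argument would need care.
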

\begin{lma}\cite[Lemma 2.3]{dkl5}\label{sumplogp}
    Let $\mathcal{P}, \mcm$, and $X$ satisfy the Condition \eqref{star}. Let $x \in X$. If $X = \mathbb{Q}$, we have
    $$\sum_{N(\mfp) \leq x/2} \frac{1}{N(\mfp) \log (x/N(\mfp))} = O \left( \frac{\log \log x}{\log x} \right),$$
    and if $X = \{ q^{z} : z \in \mathbb{Z} \}$, we have
    $$\sum_{N(\mfp) \leq x/q} \frac{1}{N(\mfp) \log (x/N(\mfp))} = O \left( \frac{\log \log x}{\log x} \right).$$
\end{lma}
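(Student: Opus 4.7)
The plan is to split the range of summation at $\sqrt{x}$ and handle the two resulting partial sums by different techniques. Write
\[
\Sigma := \sum_{N(\mfp) \leq x/2} \frac{1}{N(\mfp)\log(x/N(\mfp))} = \Sigma_1 + \Sigma_2,
\]
with $\Sigma_1$ ranging over $N(\mfp) \leq \sqrt{x}$ and $\Sigma_2$ over $\sqrt{x} < N(\mfp) \leq x/2$. I would first dispense with $\Sigma_1$ by observing that $\log(x/N(\mfp)) \geq \tfrac{1}{2}\log x$ in that range, so that
\[
\Sigma_1 \leq \frac{2}{\log x}\sum_{N(\mfp) \leq \sqrt{x}} \frac{1}{N(\mfp)},
\]
and then invoking \lmaref{boundnm}(3), which yields $\Sigma_1 = O(\log\log x / \log x)$ after noting $\log\log\sqrt{x} = \log\log x - \log 2$.

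The main work is the estimate of $\Sigma_2$, where $\log(x/N(\mfp))$ can be as small as $\log 2$ and no direct Mertens-type formula is available. I would handle this by a dyadic decomposition: partition the range by $I_k := (x/2^{k+1}, x/2^k]$ for $k = 1, \ldots, K$ with $K := \lfloor \tfrac{1}{2}\log_2 x\rfloor$, which covers $(\sqrt{x}/2, x/2] \supset (\sqrt{x}, x/2]$. For any $\mfp$ with $N(\mfp) \in I_k$, the pointwise bounds $1/N(\mfp) \leq 2^{k+1}/x$ and $\log(x/N(\mfp)) \geq k\log 2$, combined with the count $|\{\mfp : N(\mfp) \in I_k\}| \leq \Pi(x/2^k) = O\bigl((x/2^k)/\log(x/2^k)\bigr)$ from \lmaref{gpnt}, give a contribution from $I_k$ of
\[
O\!\left(\frac{2^{k+1}}{x} \cdot \frac{1}{k} \cdot \frac{x/2^k}{\log(x/2^k)}\right) = O\!\left(\frac{1}{k\log(x/2^k)}\right).
\]
Since $k \leq K$ forces $\log(x/2^k) \geq \tfrac{1}{2}\log x$, summing over $k$ yields
\[
\Sigma_2 = O\!\left(\frac{1}{\log x}\sum_{k=1}^{K}\frac{1}{k}\right) = O\!\left(\frac{\log\log x}{\log x}\right).
\]

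Combining $\Sigma_1$ and $\Sigma_2$ delivers the claim for $X = \mathbb{Q}$. For $X = \{q^z : z \in \mathbb{Z}\}$, the argument is identical with $q$ replacing $2$ in the dyadic split, so that $I_k := (x/q^{k+1}, x/q^k]$ and the upper endpoint of summation is $x/q$ (which is forced in the discrete case, since $\log(x/N(\mfp))$ must stay bounded away from zero and the nearest admissible norm below $x$ is $x/q$). I expect the main technical obstacle to be precisely the estimate of $\Sigma_2$: pointwise bounds on $1/N(\mfp)$ or on $1/\log(x/N(\mfp))$ applied separately cannot recover the claimed $\log\log x / \log x$ saving, because either factor can be as large as a constant in part of the range. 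The simultaneous dyadic control of both factors against the prime count supplied by \lmaref{gpnt} is what makes the saving visible.
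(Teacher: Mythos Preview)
Your argument is correct. Note, however, that the paper does not itself prove this lemma: it is quoted as \cite[Lemma 2.3]{dkl5} from the authors' earlier work, so strictly speaking there is no proof here to compare against.

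That said, the paper does prove the closely analogous \lmaref{sumplogx/N(mfp)^2} (the $1/\log^2$ variant), and the method there differs from yours in a way worth noting. The authors partition the \emph{entire} range $[1,x/2]$ into $e$-adic blocks $I_j=[e^{j-1},e^j]$, bound the contribution of each block by $\ll 1/(j(\log x-j)^2)$ via \lmaref{gpnt}, and then finish with a partial-fraction decomposition of the resulting sum over $j$. The same template applied to the single-log sum gives $\sum_j 1/(j(\log x-j))$, and the identity $\tfrac{1}{j(\log x-j)}=\tfrac{1}{\log x}(\tfrac1j+\tfrac{1}{\log x-j})$ immediately yields $O((\log\log x)/\log x)$. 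Your approach---splitting first at $\sqrt{x}$, handling the small primes by Mertens, and then running a dyadic decomposition $(x/2^{k+1},x/2^k]$ only over the large primes---avoids the partial-fraction step entirely, since your restriction $k\le K$ already forces $\log(x/2^k)\ge\tfrac12\log x$ uniformly. The paper's $e$-adic scheme, by contrast, treats the whole range at once without a preliminary split, which is what lets the same decomposition be reused verbatim for higher powers of the logarithm. Both routes rest on the same input, namely \lmaref{gpnt}.
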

\begin{lma}\cite[Lemma 2.4]{dkl5}\label{sumnwpx/2} 
  Let $\mathcal{P}, \mcm$, and $X$ satisfy the Condition \eqref{star}. Let $x \in X$. If $X = \mathbb{Q}$, we have
    $$\sum_{\substack{\mfp \\ N(\mfp) \leq x/2}}  \frac{1}{N(\mfp)} \log \log \frac{x}{N(\mfp)} = (\log \log x)^2 + \mfa \log \log x + \mathfrak{B} +  O \left( \frac{\log \log x}{\log x} \right),$$
and if $X = \{ q^{z} : z \in \mathbb{Z} \}$, we have
  $$\sum_{\substack{\mfp \\ N(\mfp) \leq x/q}}  \frac{1}{N(\mfp)} \log \log \frac{x}{N(\mfp)} = (\log \log x)^2 + \mfa \log \log x + \mathfrak{B} +  O \left( \frac{\log \log x}{\log x} \right),$$
where $\mfa$ and $\mathfrak{B}$ are defined in \eqref{A} and \eqref{B} respectively.
\end{lma}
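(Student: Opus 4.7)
The plan is to proceed by partial (Abel) summation, using the generalized Mertens formula from \lmaref{boundnm}(3). Set $c = 2$ in the case $X = \mathbb{Q}$ and $c = \mfq$ in the case $X = \{\mfq^z\}$, and let $A(t) := \sum_{N(\mfp) \leq t} 1/N(\mfp) = \log \log t + \mfa + O(1/\log t)$. Noting that $\frac{d}{dt}\log\log(x/t) = -1/(t\log(x/t))$ and $A(t) = 0$ for $t$ below the smallest prime norm, Stieltjes integration by parts gives
\begin{equation*}
\sum_{N(\mfp) \leq x/c} \frac{\log\log(x/N(\mfp))}{N(\mfp)} = \log\log c \cdot A(x/c) + \int_c^{x/c} \frac{A(t)}{t\log(x/t)}\, dt.
\end{equation*}

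Next, I would substitute the Mertens expansion $A(t) = \log\log t + \mfa + O(1/\log t)$ into the integral, producing three pieces. The $\mfa$-piece evaluates, under the substitution $u = \log(x/t)$, to $\mfa(\log\log(x/c) - \log\log c)$. The error piece $\int_c^{x/c} dt/(t\log t \log(x/t))$ is shown to be $O(\log\log x/\log x)$ by splitting the range at $t = \sqrt{x}$, in the same spirit as the proof of \lmaref{sumplogp}. For the main piece $\int_c^{x/c} \log\log t/(t\log(x/t))\, dt$, the substitution $u = \log(x/t)$ together with $\log(\log x - u) = \log\log x + \log(1 - u/\log x)$ separates it into a leading term $\log\log x \cdot (\log\log(x/c) - \log\log c)$ plus a remaining integral which, under the further change of variable $v = u/\log x$, becomes $\int_z^{1-z} \log(1-v)/v \, dv$ with $z = \log c/\log x$. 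This equals $-\operatorname{Li}_2(1-z) + \operatorname{Li}_2(z)$, and Abel's reflection identity $\operatorname{Li}_2(w) + \operatorname{Li}_2(1-w) = \pi^2/6 - \log w \log(1-w)$ together with $\operatorname{Li}_2(z) = z + O(z^2)$ as $z \to 0$ produces the value $-\pi^2/6$ plus an $O(\log\log x/\log x)$ correction.

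Combining all contributions with $\log\log(x/c) = \log\log x + O(1/\log x)$, the leading terms collapse to $(\log\log x)^2 + \mfa \log\log x$, and the remaining constants package into $\mathfrak{B}$ as defined in \eqref{B}, with the extra $(\log\log \mfq)^2$ term in the $\mfq$-power case arising from residual boundary contributions from $\log\log c \cdot A(x/c)$ that do not cancel cleanly in the discrete setting. The main obstacle is the careful bookkeeping of error terms: one must verify that the $O(1/\log t)$ Mertens error integrated against the weight, the finite-$x$ deviation of the dilogarithm from its limit, and the various expansion errors all combine to the stated $O(\log\log x/\log x)$. This will rely on standard estimates for $\operatorname{Li}_2$ near the endpoints (via Abel's identity), together with direct integral bounds analogous to those in the proof of \lmaref{sumplogp}.
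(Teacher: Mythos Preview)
This lemma is quoted from \cite[Lemma~2.4]{dkl5} and is not proved in the present paper, so there is no proof here to compare against directly.

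Your Abel-summation plan is the natural route and, for $X=\mathbb{Q}$, it works as you outline: the boundary term, the $\mfa$-piece, and the leading part of the main integral combine to $(\log\log x)^2+\mfa\log\log x$; the dilogarithm integral gives $-\pi^2/6$ with an $O(\log\log x/\log x)$ correction via the reflection identity; and the Mertens remainder integrates to $O(\log\log x/\log x)$ after splitting at $\sqrt{x}$.

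The gap is in the $\mfq$-power case. You claim the extra $(\log\log\mfq)^2$ comes from boundary contributions ``that do not cancel cleanly in the discrete setting,'' but if you run your own computation with $c=\mfq$, writing $\ell=\log\log c$ and $L=\log\log(x/c)$, the pieces assemble as
\[
\ell\,A(x/c)+\int_c^{x/c}\frac{\log\log t+\mfa}{t\log(x/t)}\,dt
=\ell L+\mfa\ell+(\log\log x)(L-\ell)+\mfa(L-\ell)-\tfrac{\pi^2}{6}+O\Bigl(\tfrac{\log\log x}{\log x}\Bigr),
\]
and the $\ell$-dependence cancels identically, leaving $(\log\log x)^2+\mfa\log\log x-\pi^2/6$ regardless of $c$. (The estimate $A(t)=\log\log t+\mfa+O(1/\log t)$ extends from $t\in X$ to all real $t>1$ with the same error, since $A$ is constant on each interval $[\mfq^m,\mfq^{m+1})$ and $\log\log(\mfq^{m+1})-\log\log(\mfq^m)=O(1/m)$.) Thus your argument, executed exactly as described, produces $\mathfrak{B}=-\pi^2/6$ in \emph{both} cases and does not recover the stated value $(\log\log\mfq)^2-\pi^2/6$. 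Either there is a genuinely different mechanism supplying $(\log\log\mfq)^2$ in the discrete setting that you have not identified, or the discrepancy lies in the formulation of $\mathfrak{B}$ itself; you should consult the proof in \cite{dkl5} to determine which.
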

\begin{lma}\cite[Lemma 2.5]{dkl5}\label{gsaidak}
  Let $\mathcal{P}, \mcm$, and $X$ satisfy the Condition \eqref{star}. Let $\mfp$ and $\mfq$ denote prime elements. Let $x \in X$. Then, we have
\begin{equation*}
    \sum_{\substack{\mfp, \mfq \\ N(\mfp) N(\mfq) \leq x}} \frac{1}{N(\mfp) N(\mfq)} = (\log \log x)^2 + 2 \mfa \log \log x + \mfa^2 + \mathfrak{B} + O \left( \frac{\log \log x}{\log x} \right),
\end{equation*}
where $\mfa$ and $\mathfrak{B}$ are defined in \eqref{A} and \eqref{B} respectively.
\end{lma}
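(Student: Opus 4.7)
The plan is to reduce the symmetric double sum to a single sum over $\mfp$ via the Mertens-type estimate \lmaref{boundnm} part (3), and then to recognize the remaining main term as the quantity evaluated in \lmaref{sumnwpx/2}.

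First I would observe that, since every prime element has norm at least $2$ when $X = \mathbb{Q}$ and at least $q$ when $X = \{q^z : z \in \mathbb{Z}\}$, any $\mfp$ actually contributing to the double sum satisfies $N(\mfp) \leq x/2$ in the first case and $N(\mfp) \leq x/q$ in the second. With this restriction understood on the outer variable, I can factor
\begin{equation*}
\sum_{\substack{\mfp, \mfq \\ N(\mfp) N(\mfq) \leq x}} \frac{1}{N(\mfp) N(\mfq)} = \sum_{\mfp} \frac{1}{N(\mfp)} \sum_{\substack{\mfq \\ N(\mfq) \leq x/N(\mfp)}} \frac{1}{N(\mfq)}.
\end{equation*}

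Next I would apply \lmaref{boundnm} part (3) with $y = x/N(\mfp)$ to the inner sum, giving
\begin{equation*}
\sum_{N(\mfq) \leq x/N(\mfp)} \frac{1}{N(\mfq)} = \log \log \frac{x}{N(\mfp)} + \mfa + O\left( \frac{1}{\log(x/N(\mfp))} \right).
\end{equation*}
Substituting this splits the double sum into three pieces. The piece $\sum_\mfp \frac{1}{N(\mfp)} \log \log (x/N(\mfp))$ is exactly the expression handled in \lmaref{sumnwpx/2}, contributing $(\log \log x)^2 + \mfa \log \log x + \mathfrak{B} + O(\log \log x/\log x)$. The piece $\mfa \sum_\mfp 1/N(\mfp)$, by a further application of \lmaref{boundnm} part (3), contributes $\mfa \log \log x + \mfa^2 + O(1/\log x)$. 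The propagated error $\sum_\mfp 1/(N(\mfp) \log(x/N(\mfp)))$ is $O(\log \log x/\log x)$ by \lmaref{sumplogp}. Summing the three contributions yields
\begin{equation*}
(\log \log x)^2 + 2\mfa \log \log x + \mfa^2 + \mathfrak{B} + O\left( \frac{\log \log x}{\log x} \right),
\end{equation*}
which is the claimed asymptotic.

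The argument is essentially bookkeeping; the only care needed is to track the cutoff $x/2$ versus $x/q$ on the outer variable so that the hypotheses of \lmaref{sumplogp} and \lmaref{sumnwpx/2} apply verbatim. No serious obstacle is anticipated, since every analytic ingredient has already been established in the preceding lemmas.
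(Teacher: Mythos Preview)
Your proposal is correct. The paper does not supply its own proof of this lemma but simply quotes it from \cite[Lemma~2.5]{dkl5}; your reduction via \lmaref{boundnm}(3), \lmaref{sumnwpx/2}, and \lmaref{sumplogp} is the natural argument and is presumably how the cited reference proceeds.
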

Next, we establish a new result to study the involvement of prime elements appearing in our work.
\begin{lma}\label{sumplogx/N(mfp)^2}
   Let $\mathcal{P}, \mcm$, and $X$ satisfy the Condition \eqref{star}. Let $\mfp$ denote a prime element. Let $x \in X$. If $X = \mathbb{Q}$, we have
     $$\sum_{N(\mfp) \leq x/2} \frac{1}{N(\mfp) \log^2 (x/N(\mfp))} = O \left( \frac{1}{\log x} \right),$$
   and if $X = \{ q^{z} : z \in \mathbb{Z} \}$, we have
    $$\sum_{N(\mfp) \leq x/q} \frac{1}{N(\mfp) \log^2 (x/N(\mfp))} = O \left( \frac{1}{\log x} \right).$$
\end{lma}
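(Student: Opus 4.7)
The plan is to split the sum at the threshold $N(\mfp) = x^{1/2}$, handle the small-prime range by pulling out a uniform lower bound on $\log^{2}(x/N(\mfp))$, and handle the large-prime range by a dyadic decomposition together with the prime-counting estimate from \lmaref{gpnt}. This mirrors the strategy behind the companion estimate \lmaref{sumplogp}, and the extra factor of $\log$ in the denominator is precisely what sharpens the resulting bound from $O(\log\log x/\log x)$ to $O(1/\log x)$.

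First, for primes with $N(\mfp) \leq x^{1/2}$, I would use the elementary inequality $\log(x/N(\mfp)) \geq \tfrac{1}{2}\log x$, which gives $1/\log^{2}(x/N(\mfp)) \leq 4/\log^{2} x$. Factoring this constant out of the sum and invoking \lmaref{boundnm}(3) to estimate $\sum_{N(\mfp)\leq x^{1/2}} 1/N(\mfp) = O(\log\log x)$ controls this portion by $O(\log\log x/\log^{2} x)$, which is absorbed into $O(1/\log x)$.

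Second, for primes with $x^{1/2} < N(\mfp) \leq x/c$ (with $c=2$ when $X=\mathbb{Q}$ and $c=q$ when $X=\{q^{z}:z\in\mathbb{Z}\}$), the plan is to decompose the range dyadically into intervals of the form $2^{k}x^{1/2} < N(\mfp) \leq 2^{k+1}x^{1/2}$ for $k=0,1,\dots,K$ with $K = O(\log x)$. In the $k$-th piece, \lmaref{gpnt} yields a prime count bounded by $O(2^{k}x^{1/2}/\log x)$ (since the relevant norm lies between $x^{1/2}$ and $x$, so its logarithm is of order $\log x$), while the uniform lower bounds $N(\mfp) \geq 2^{k}x^{1/2}$ and $\log(x/N(\mfp)) \geq u_{k} := \tfrac{1}{2}\log x - (k+1)\log 2$ control the denominators. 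Multiplying these together, the $k$-th piece contributes $O(1/(u_{k}^{2}\log x))$.

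The main technical point will be verifying that $\sum_{k=0}^{K} u_{k}^{-2} = O(1)$ uniformly in $x$, which is what makes the whole strategy succeed. Since $u_{k}$ decreases linearly from roughly $\tfrac{1}{2}\log x$ down to $\log c = O(1)$ as $k$ grows from $0$ to $K$, the reindexing $j = K - k + 1$ converts this series into one of the shape $\sum_{j \geq 1} (j\log 2)^{-2}$, which is a convergent numerical constant independent of $x$. Adding the contributions of the two ranges then gives the desired bound $O(1/\log x)$, and the argument carries over verbatim to the case $X = \{q^{z} : z \in \mathbb{Z}\}$ with $c$ replaced by $q$ throughout.
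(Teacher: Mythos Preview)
Your proposal is correct and yields the lemma, but by a different route than the paper's own proof. The paper partitions the entire range $[1,x/2]$ (respectively $[1,x/q]$) into $e$-adic blocks $I_j=[e^{j-1},e^j]$, uses \lmaref{gpnt} to bound the $j$-th block by $\ll 1/(j(\log x-j)^{2})$, and then handles the resulting sum $\sum_{j\le\lfloor\log(x/2)\rfloor}1/(j(\log x-j)^{2})$ via the partial-fraction identity
\[
\frac{1}{j(\log x-j)^{2}}=\frac{1}{(\log x)^{2}}\Bigl(\frac{1}{j}+\frac{1}{\log x-j}\Bigr)+\frac{1}{(\log x)(\log x-j)^{2}},
\]
bounding the three pieces separately. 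Your preliminary split at $x^{1/2}$ dispatches the small-prime range in one line via Mertens (at the cost of a harmless extra $\log\log x$), and your dyadic treatment of the large-prime range sidesteps the partial-fraction computation entirely, collapsing directly to a convergent numerical series; this is arguably the more transparent endgame. One point worth tightening: at the top of the dyadic ladder (the last admissible $k$) the quantity $u_k=\tfrac12\log x-(k+1)\log 2$ can drop to $0$ when $c=2$, so for that boundary block you should invoke $\log(x/N(\mfp))\ge\log c$ directly rather than $u_k$; since this affects only $O(1)$ terms, it does not disturb the uniform $O(1)$ bound on $\sum_k u_k^{-2}$.
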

\begin{proof}
    Let $X = \mathbb{Q}$. We partition the interval $[1,x/2]$ into subintervals of the form $I_j = [e^{j-1}, e^{j}]$. Over this interval, by \lmaref{gpnt}, we have
    $$\sum_{N(\mfp) \in I_j} \frac{1}{N(\mfp) \log^2 (x/N(\mfp))} \ll \frac{1}{\log^2(x/e^j)} \sum_{N(\mfp) \in I_j} \frac{1}{N(\mfp)} \ll \frac{1}{j (\log x - j)^2}.$$
  Putting the above together with the fact that 
  \begin{align*}
      \sum_{\substack{\mfp \\ e^{\lfloor \log(x/2) \rfloor} < N(\mfp) \leq x/2}} \frac{1}{N(\mfp) \log^2 (x/N(\mfp))} \ll \frac{1}{\log^2 2} \sum_{\substack{\mfp \\ e^{\lfloor \log(x/2) \rfloor} < N(\mfp) \leq x/2}} \frac{1}{N(\mfp)} 
      & \ll \frac{1}{\log x},
  \end{align*}
  where in the second last inequality we use \lmaref{gpnt}, we obtain
  \begin{align}\label{need1}
      \sum_{N(\mfp) \leq x/2} \frac{1}{N(\mfp) \log^2 (x/N(\mfp))} & = \sum_{j=1}^{\lfloor \log(x/2) \rfloor} \sum_{N(\mfp) \in I_j} \frac{1}{N(\mfp) \log^2 (x/N(\mfp))}  + O \left( \frac{1}{\log x} \right)\notag \\
      & = \sum_{j=1}^{\lfloor \log(x/2) \rfloor} \frac{1}{j (\log x - j)^2} + O \left( \frac{1}{\log x} \right).
  \end{align}
  Note that
  $$\frac{1}{j(\log -j)^2} = \frac{1}{(\log x)^2} \left( \frac{1}{j} + \frac{1}{\log x -j} \right) +\frac{1}{(\log x)(\log x - j)^2}.$$
  Moreover,
  $$\sum_{j=1}^{\lfloor \log(x/2) \rfloor} \frac{1}{j} 
  \ll \log \log x,
  \quad
  \sum_{j=1}^{\lfloor \log(x/2) \rfloor} \frac{1}{\log x -j} 
  \ll \log \log x, \quad \text{and } \sum_{j=1}^{\lfloor \log(x/2) \rfloor} \frac{1}{(\log x -j)^2} 
  \ll 1.$$
  As a result,
  $$\sum_{j=1}^{\lfloor \log(x/2) \rfloor} \frac{1}{j (\log x - j)^2} \ll \frac{\log \log x}{(\log x)^2} + \frac{1}{\log x} \ll \frac{1}{\log x}.$$
  Putting the above back into \eqref{need1} completes the proof when $X = \mathbb{Q}$. For the case $X = \{ q^{z} : z \in \mathbb{Z} \}$, we partition the interval $[1,x/q]$ into subintervals of the form $I_j = [e^{j-1}, e^{j}]$ and repeat the above steps to complete the proof.
\end{proof}

Next, we record some distribution results for $h$-free and $h$-full elements which are co-prime to a given set of prime elements. 

\begin{lma}\label{hfreeidealrestrict}\cite[Lemma 3.1]{dkl5}
    Let $h \geq 2$ and $r \geq 1$ be integers. Let $\ell_1,\ldots,\ell_r$ be fixed distinct prime elements and $\mathcal{S}_{h,\ell_1,\ldots,\ell_r}(x)$ denote the set of $h$-free elements $\mfm$ with norm $N(\mfm) \leq x$ and with $n_{\ell_i}(\mfm) = 0$ for all $i \in \{1,\cdots, r\}$. Then, we have
    $$|\mathcal{S}_{h,\ell_1,\ldots,\ell_r}(x)| = \prod_{i=1}^r \left( \frac{N(\ell_i)^h - N(\ell_i)^{h-1}}{N(\ell_i)^h - 1} \right) \frac{\kappa}{\zeta_\mcm(h)} x + O_{h,r} \left( R_{\mathcal{S}_h}(x) \right),$$
    where $\zeta_\mcm(\cdot)$ is the generalized $\zeta$-function defined in \eqref{gzeta}, and $R_{\mathcal{S}_h}(x)$ is defined in \eqref{RSh(x)}.
\end{lma}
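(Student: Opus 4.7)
The plan is to reduce the claim to the unrestricted $h$-free count \eqref{hfreeidealcount} by passing to a submonoid. Let $\mcp' := \mcp \setminus \{\ell_1, \ldots, \ell_r\}$ and let $\mcm'$ be the free abelian monoid it generates. Then $\mathcal{S}_{h, \ell_1, \ldots, \ell_r}(x)$ is exactly the set of $h$-free elements of $\mcm'$ of norm at most $x$, so the task splits into three steps: (i) show that $\mcm'$ inherits Condition \eqref{star} with explicit new parameters $(\kappa', \theta)$; (ii) apply \eqref{hfreeidealcount} inside $\mcm'$; and (iii) rearrange the resulting leading constant.

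For step (i) I would use inclusion--exclusion on divisibility by the $\ell_i$'s. Since every $\mfm \in \mcm$ decomposes uniquely as $\mfm = \sum_{i=1}^{r} a_i \ell_i + \mfm'$ with $a_i \geq 0$ and $\mfm' \in \mcm'$, the count of elements of $\mcm(x)$ divisible by $\ell_i$ for every $i$ in a subset $T \subseteq \{1,\ldots,r\}$ equals $\mcm(x / \prod_{i \in T} N(\ell_i))$. Summing with signs $(-1)^{|T|}$ and invoking \eqref{star} yields
$$\mcm'(x) = \kappa \prod_{i=1}^{r} \left(1 - \frac{1}{N(\ell_i)}\right) x + O_r(x^\theta),$$
so $\mcm'$ satisfies \eqref{star} with $\kappa' := \kappa \prod_i (1 - 1/N(\ell_i)) > 0$ and the same exponent $\theta$. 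The $r$-dependence (but no dependence on the specific $\ell_i$) enters only through the $2^r$ summands of the inclusion--exclusion.

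For step (ii) I apply \eqref{hfreeidealcount} to $\mcm'$ to obtain
$$|\mathcal{S}_{h, \ell_1, \ldots, \ell_r}(x)| = \frac{\kappa'}{\zeta_{\mcm'}(h)}\, x + O_{h,r}\bigl(R_{\mathcal{S}_h}(x)\bigr),$$
the error retaining the same shape $R_{\mathcal{S}_h}(x)$ because $\mcm'$ preserves the original $\theta$. For step (iii), the Euler-product expression \eqref{gzeta} gives $\zeta_{\mcm'}(h) = \zeta_\mcm(h) \prod_{i=1}^{r} (1 - N(\ell_i)^{-h})$, and the elementary identity
$$\frac{1 - N(\ell_i)^{-1}}{1 - N(\ell_i)^{-h}} = \frac{N(\ell_i)^h - N(\ell_i)^{h-1}}{N(\ell_i)^h - 1}$$
converts $\kappa'/\zeta_{\mcm'}(h)$ into precisely the leading constant in the statement.

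The only delicate point I anticipate is verifying in step (i) that the implicit constant in $O_r(x^\theta)$ for $\mcm'$ is independent of the particular norms $N(\ell_i)$, so that after step (ii) the final error is genuinely $O_{h,r}(R_{\mathcal{S}_h}(x))$ with no hidden dependence on the chosen primes. This is true because the contribution of each subset $T$ to the error in step (i) is bounded by a multiple of $x^\theta$ uniformly in the $\ell_i$, and there are only $2^r$ such subsets; likewise the implicit constant obtained when quoting \eqref{hfreeidealcount} for $\mcm'$ depends on $\kappa'$ only through $0 < \kappa' \leq \kappa$. Once this uniformity is checked, the remainder of the argument is routine algebra.
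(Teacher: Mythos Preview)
The paper does not prove this lemma; it simply quotes it from the authors' earlier work \cite[Lemma 3.1]{dkl5}, so there is no in-paper argument to compare against. Your submonoid approach is a clean and correct route to the statement: step (i) is a legitimate inclusion--exclusion (each subset $T$ contributes an error at most $(x/\prod_{i\in T}N(\ell_i))^\theta \le x^\theta$, and there are $2^r$ subsets, independent of the particular $\ell_i$), and the algebra in step (iii) is exactly right.

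The only place that needs a slightly sharper statement is your uniformity claim in step (ii). When you apply \eqref{hfreeidealcount} to $\mcm'$, the resulting implicit constant depends on the full data in Condition \eqref{star} for $\mcm'$: the exponent $\theta$ (unchanged), the leading coefficient $\kappa'$, \emph{and} the implicit constant in the $O(x^\theta)$ error. You have controlled the last of these in step (i) (it is at most $2^r$ times the original), and since $N(\ell_i)\ge 2$ you also have $\kappa/2^r \le \kappa' \le \kappa$; both bounds depend only on $r$, so the final error is genuinely $O_{h,r}(R_{\mathcal{S}_h}(x))$. Your phrase ``depends on $\kappa'$ only through $0<\kappa'\le\kappa$'' undersells this, because the error constant in \eqref{star} also feeds in, but once both are tracked the conclusion stands.
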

\begin{lma}\label{hfullidealsrestrict}\cite[Lemma 4.2]{dkl5}
Let $x \in X$ and let $h \geq 2$ be an integer. Let $\ell_1, \cdots, \ell_r$ be fixed prime elements and $\mathcal{N}_{h,\ell_1,\cdots, \ell_r}(x)$ denote the set of $h$-full elements with norm $N(\mfm) \leq x$ and with $n_{\ell_i}(\mfm) = 0$ for all $i \in \{1,\cdots, r\}$. Then, we have
$$|\mathcal{N}_{h,\ell_1,\cdots, \ell_r}(x)| = \prod_{i=1}^r \frac{\kappa \gamma_{\scaleto{h}{4.5pt}}}{\left( 1+ \frac{N(\ell_i)^{-1}}{1-N(\ell_i)^{-1/h}} \right)} x^{1/h} + O_{h,r} \big( R_{\mathcal{N}_h}(x) \big),$$
where $\gamma_{\scaleto{h}{4.5pt}}$ is defined in \eqref{gammahk},
and where $R_{\mathcal{N}_h}(x)$ is defined in \eqref{E2(x)}.
\end{lma}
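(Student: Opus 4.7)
The plan is to reduce the restricted count to the unrestricted count $|\mathcal{N}_h(x)|$ (whose asymptotic is stated just before the lemma) via a Dirichlet-convolution identity. The key observation is that every $h$-full element admits a unique decomposition
\[
\mfm \;=\; \mfm_0 + \mfm_1, \qquad \mfm_0 \;=\; \sum_{i=1}^r k_i \ell_i, \quad k_i \in \{0\}\cup\{h,h+1,\ldots\},
\]
where $\mfm_0$ collects the part of $\mfm$ supported on $\{\ell_1,\ldots,\ell_r\}$ and $\mfm_1 \in \mathcal{N}_{h,\ell_1,\ldots,\ell_r}$. Since $N$ is a monoid homomorphism, grouping $\mfm \in \mathcal{N}_h$ by its local part $\mfm_0$ yields the convolution identity
\[
|\mathcal{N}_h(x)| \;=\; \sum_{\substack{\mfm_0 \\ N(\mfm_0)\leq x}} \bigl|\mathcal{N}_{h,\ell_1,\ldots,\ell_r}(x/N(\mfm_0))\bigr|.
\]

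To solve for $|\mathcal{N}_{h,\ell_1,\ldots,\ell_r}|$, I would invert the relation at the level of Dirichlet series. Setting
\[
A(s) \;:=\; \prod_{i=1}^r\Bigl(1 + \sum_{k\geq h}N(\ell_i)^{-ks}\Bigr) \;=\; \prod_{i=1}^r\Bigl(1 + \frac{N(\ell_i)^{-hs}}{1-N(\ell_i)^{-s}}\Bigr),
\]
the identity above reads $D_h(s) = A(s)\cdot D_{h,\ell_1,\ldots,\ell_r}(s)$ as Dirichlet series. Expanding $1/A(s) = \sum_{\mfm_0'}\mu'(\mfm_0')N(\mfm_0')^{-s}$ as a Dirichlet series supported on the submonoid on $\ell_1,\ldots,\ell_r$ and convolving gives
\[
|\mathcal{N}_{h,\ell_1,\ldots,\ell_r}(x)| \;=\; \sum_{\substack{\mfm_0' \\ N(\mfm_0')\leq x}} \mu'(\mfm_0')\,\bigl|\mathcal{N}_h(x/N(\mfm_0'))\bigr|.
\]
Each Euler factor of $A(s)$ is a rational function in $N(\ell_i)^{-s}$ bounded away from zero in a strip containing $\Re(s)=1/h$, so the coefficients $\mu'(\mfm_0')$ decay geometrically and all sums below converge absolutely.

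Substituting $|\mathcal{N}_h(y)| = \kappa\gamma_h\,y^{1/h} + O_h\bigl(R_{\mathcal{N}_h}(y)\bigr)$, the main term collects as
\[
\kappa\gamma_h\,x^{1/h}\sum_{\mfm_0'}\mu'(\mfm_0')N(\mfm_0')^{-1/h} \;=\; \frac{\kappa\gamma_h\,x^{1/h}}{A(1/h)} \;=\; \frac{\kappa\gamma_h\,x^{1/h}}{\prod_{i=1}^r\bigl(1 + N(\ell_i)^{-1}/(1-N(\ell_i)^{-1/h})\bigr)},
\]
which matches the stated main term. The tail $\sum_{N(\mfm_0')>x}\mu'(\mfm_0')N(\mfm_0')^{-1/h}$ is geometrically small in $x$ and absorbed into the error, while the bulk error contribution is
\[
\sum_{\mfm_0'}|\mu'(\mfm_0')|\,R_{\mathcal{N}_h}(x/N(\mfm_0')) \;\ll\; x^{\upsilon/h}\sum_{\mfm_0'}|\mu'(\mfm_0')|N(\mfm_0')^{-\upsilon/h} \;\ll_{h,r}\; R_{\mathcal{N}_h}(x).
\]

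The main obstacle, as usual for such a reduction, lies in the error analysis: keeping the implicit constant uniform in the fixed data $(\ell_1,\ldots,\ell_r)$ and handling the truncation at $N(\mfm_0')\leq x$. Both points rest on the fact that $\mfm_0'$ ranges over a monoid generated by only $r$ fixed primes, so the Dirichlet series in question are products of $r$ elementary series in $N(\ell_i)^{-s}$ converging absolutely in a wide strip, making the tail estimates essentially routine. A cleaner alternative organization would be to induct on $r$, applying the single-prime version of the convolution identity at each step; this replaces the multivariate bookkeeping above by repeated one-variable geometric-series manipulations and makes the dependence on $r$ transparent.
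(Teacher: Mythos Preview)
The paper does not prove this lemma; it is quoted verbatim from the authors' earlier article \cite[Lemma 4.2]{dkl5}, so there is no in-paper argument to compare against. Your convolution/M\"obius-inversion idea is the natural reflex, and the formal identity
\[
|\mathcal{N}_{h,\ell_1,\ldots,\ell_r}(x)|=\sum_{N(\mfm_0')\le x}\mu'(\mfm_0')\,|\mathcal{N}_h(x/N(\mfm_0'))|
\]
is valid. The gap is in the line ``each Euler factor of $A(s)$ is \ldots\ bounded away from zero in a strip containing $\Re(s)=1/h$, so the coefficients $\mu'(\mfm_0')$ decay geometrically and all sums below converge absolutely.'' This is not automatic. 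For a single prime $\ell$ the coefficients $b_n$ of $1/A(s)=(1-z)/(1-z+z^h)$ in $z=N(\ell)^{-s}$ have radius of convergence equal to the smallest modulus $\rho_h$ of a zero of $1-z+z^h$, and for $h\ge3$ these zeros lie \emph{strictly inside} the unit disk. A short computation gives $\rho_3\approx0.871$, $\rho_4\approx0.844$, $\rho_5\approx0.81$, while the point at which you need absolute convergence is $z=N(\ell)^{-\upsilon/h}$ with $\upsilon/h$ as small as $1/(h+1)$. Already for $h=4$, $N(\ell)=2$ one has $2^{-1/5}\approx0.871>\rho_4$, and for $h=5$, $N(\ell)=2$ even the main-term series $\sum b_n\,2^{-n/5}$ diverges since $2^{-1/5}\approx0.871>\rho_5$. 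So the error analysis as written breaks down, and the inductive ``cleaner alternative'' on $r$ does not help because the single-prime step has the same issue.

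The fix is to abandon the inversion and instead rerun the proof of the unrestricted count $|\mathcal{N}_h(x)|$ with the coprimality constraint built in. That proof factors the generating series $\prod_\mfp\bigl(1+N(\mfp)^{-hs}/(1-N(\mfp)^{-s})\bigr)$ as $\zeta_\mcm(hs)\zeta_\mcm((h{+}1)s)\cdots\zeta_\mcm((2h{-}1)s)$ times an Euler product absolutely convergent past $\Re(s)=1/(2h)$, and then applies \eqref{star}. Imposing $n_{\ell_i}(\mfm)=0$ simply deletes the Euler factor at each $\ell_i$ from every piece; this changes the leading constant by exactly $\prod_i\bigl(1+N(\ell_i)^{-1}/(1-N(\ell_i)^{-1/h})\bigr)^{-1}$ and leaves the error machinery untouched, giving $O_{h,r}(R_{\mathcal{N}_h}(x))$ directly. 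This is almost certainly how \cite{dkl5} proceeds.
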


In the following sections, we will always assume that $\mathcal{P}, \mcm$, and $X$ satisfy Condition \eqref{star}.
\section{The moments of \texorpdfstring{$\Omega(n)$}{} over \texorpdfstring{$h$}{}-free numbers}

This section proves \thmref{hfreeOmega} by establishing the first and the second moments of $\Omega(\mfm)$ over $h$-free elements.

\begin{proof}[\textbf{Proof of \thmref{hfreeOmega}}]
Using $\mfm = k \mfp + \mathfrak{y}$ with $n_\mfp(\mathfrak{y}) = 0$ when $n_\mfp(\mfm) = k$, and $r_\mfp = \min \left\{ h-1, \left\lfloor \log x / \log N(\mfp) \right\rfloor \right\}$, we obtain
\begin{equation*}
    \sum_{\mfm \in \mathcal{S}_h(x)} \Omega(\mfm) = \sum_{\mfm \in \mathcal{S}_h(x)} \sum_{\substack{\mfp \\ n_\mfp(\mfm) = k}} k = \sum_{N(\mfp) \leq x} \sum_{k=1}^{r_\mfp} \sum_{\substack{\mfm \in \mathcal{S}_h(x) \\ n_\mfp(\mfm) = k}} k = \sum_{N(\mfp) \leq x} \sum_{k=1}^{r_\mfp} k \sum_{\substack{\mathfrak{y} \in \mathcal{S}_h(x/N(\mfp)^k) \\ n_\mfp(\mathfrak{y}) = 0 }} 1.
\end{equation*}
Now, using \lmaref{hfreeidealrestrict} for a single prime element $\mfp$ to the above, part 1 of \lmaref{boundnm} and \rmkref{remark1}, we obtain
\begin{align}\label{anomega2Omega}
    \sum_{\mfm \in \mathcal{S}_h(x)} \Omega(n) & = \sum_{N(\mfp) \leq x} \sum_{k=1}^{r_\mfp} k \left( \frac{N(\mfp)^{h} - N(\mfp)^{h-1}}{N(\mfp)^k(N(\mfp)^h - 1)} \right) \frac{\kappa x}{\zeta_\mcm(h)} + O_h \left( x^{\tau} \sum_{N(\mfp) \leq x} 
 \frac{1}{N(\mfp)^{\tau}} \right) \notag \\
 & = \sum_{N(\mfp) \leq x} \sum_{k=1}^{r_\mfp} k \left( \frac{N(\mfp)^{h} - N(\mfp)^{h-1}}{N(\mfp)^k(N(\mfp)^h - 1)} \right) \frac{\kappa x}{\zeta_\mcm(h)} + O_h \left(  \frac{x}{\log x} \right).
\end{align}
To complete the proof, we only need to estimate the main term above. Recall the set
\begin{equation*}
    R:= \left\{ \mfp \ : \ N(\mfp) \leq x, \ \left\lfloor \frac{\log x}{\log N(\mfp)}  \right\rfloor < h-1 \right\}.
\end{equation*}
Using the bound $\lfloor x \rfloor \geq x - 1$ and \lmaref{gpnt}, we can write
\begin{align}\label{caser=h-1Omega}
    & \sum_{N(\mfp) \leq x} \sum_{k=1}^{r_\mfp} k \left( \frac{N(\mfp)^{h} - N(\mfp)^{h-1}}{N(\mfp)^k(N(\mfp)^h - 1)} \right) \notag \\
    & = \sum_{N(\mfp) \leq x} \sum_{k=1}^{h-1} k \left( \frac{N(\mfp)^{h} - N(\mfp)^{h-1}}{N(\mfp)^k(N(\mfp)^h - 1)} \right) -  \sum_{\substack{N(\mfp) \leq x \\ \mfp \in R}} \sum_{k=\left\lfloor \frac{\log x}{\log N(\mfp)}  \right\rfloor + 1}^{h-1} k \left( \frac{N(\mfp)^{h} - N(\mfp)^{h-1}}{N(\mfp)^k(N(\mfp)^h - 1)} \right) \notag \\
    & = \sum_{N(\mfp) \leq x} \sum_{k=1}^{h-1} k \left( \frac{N(\mfp)^{h} - N(\mfp)^{h-1}}{N(\mfp)^k(N(\mfp)^h - 1)} \right) + O_h \left( \frac{1}{\log x} \right).
\end{align}
Now, using 
\begin{align*}
    \frac{N(\mfp)^{h} - N(\mfp)^{h-1}}{N(\mfp)^h - 1} \sum_{k=1}^{h-1} \frac{k}{N(\mfp)^k}  
    & = \frac{1}{N(\mfp)} + \frac{N(\mfp)^h -h N(\mfp)^2 + h N(\mfp) - 1}{N(\mfp)(N(\mfp)-1)(N(\mfp)^h - 1)},
\end{align*} 
the formula for the reciprocal of prime elements given in  part 3 of \lmaref{boundnm}, the formula for $\mathfrak{C}_3$ given in \eqref{def-C3}, and using part 4 of \lmaref{boundnm} with $\alpha = 2$, we obtain  
\begin{align*}
    \sum_{N(\mfp) \leq x} \sum_{k=1}^{h-1} k \left( \frac{N(\mfp)^{h} - N(\mfp)^{h-1}}{N(\mfp)^k(N(\mfp)^h - 1)} \right)
    & = \log \log x + \mathfrak{C}_3 + O_h \left( \frac{1}{\log x} \right).
\end{align*}
Combining the above with \eqref{anomega2Omega} and \eqref{caser=h-1Omega} completes the first part of the proof.

Next, we prove the second-moment estimate for $\Omega(\mfm)$ over $h$-free elements. Notice that
\begin{equation}\label{mainpartOmega}
    \sum_{\substack{\mfm \in \mathcal{S}_h(x)}} \Omega^2(n) = \sum_{\substack{\mfm \in \mathcal{S}_h(x)}} \left( \sum_{\substack{\mfp \\ n_\mfp(\mfm) = k}} k \right)^2 = \sum_{\substack{\mfm \in \mathcal{S}_h(x)}} \sum_{\substack{\mfp \\ n_\mfp(\mfm) = k}} k^2 + \sum_{\substack{ \mfm \in \mathcal{S}_h(x)}} \sum_{\substack{\mfp,\mfq \\ n_\mfp(\mfm) = k, n_\mfq(\mfm) = l, \mfp \neq \mfq }} (k l),
\end{equation}
where $\mfp$ and $\mfq$ above denote prime elements. Let $r_\mfp = \min \left\{ h-1, \left\lfloor \frac{\log x}{\log N(\mfp)} \right\rfloor \right\}$. To estimate the first sum on the right-hand side above, we rewrite the sum as
\begin{equation*}
 \sum_{\substack{\mfm \in \mathcal{S}_h(x)}} \sum_{\substack{\mfp \\ n_\mfp(\mfm)=k}} k^2 = \sum_{N(\mfp) \leq x} \sum_{k=1}^{r_\mfp} \sum_{\substack{\mfm \in \mathcal{S}_h(x) \\ n_\mfp(\mfm)=k}} k^2 = \sum_{N(\mfp) \leq x} \sum_{k=1}^{r_\mfp} k^2 \sum_{\substack{\mathfrak{y} \in \mathcal{S}_h(x/N(\mfp)^k) \\ n_\mfp(\mathfrak{y}) = 0 }} 1.   
\end{equation*}
Again, similar to \eqref{anomega2Omega} and \eqref{caser=h-1Omega}, using \lmaref{hfreeidealrestrict} for a single prime $\mfp$, part 1 of \lmaref{boundnm}, and \rmkref{remark1}, we obtain
\begin{align*}
    \sum_{N(\mfp) \leq x} \sum_{k=1}^{r_\mfp} k^2 \sum_{\substack{\mathfrak{y} \in \mathcal{S}_h(x/N(\mfp)^k) \\ (\mathfrak{y},N(\mfp)) = 0 }} 1 & = \sum_{N(\mfp) \leq x} \sum_{k=1}^{h-1} k^2 \left( \frac{N(\mfp)^{h} - N(\mfp)^{h-1}}{N(\mfp)^k(N(\mfp)^h - 1)} \right) \frac{\kappa x}{\zeta_\mcm(h)} + O_h \left(  \frac{x}{\log x} \right).
\end{align*}
Note that
\begin{align*}
   & \sum_{k=1}^{h-1} k^2 \left( \frac{N(\mfp)^{h} - N(\mfp)^{h-1}}{N(\mfp)^k(N(\mfp)^h - 1)} \right) \\ 
   & = \frac{1}{N(\mfp)} \\
   & + \frac{N(\mfp)^h(3p-1) + (N(\mfp)-1)^2 - N(\mfp)(h^2 N(\mfp)^2 + (-2h^2+2h+1) N(\mfp) + (h-1)^2)}{N(\mfp)(N(\mfp)-1)^2(N(\mfp)^h - 1)}.
\end{align*}
Thus, using parts 3 and 4 with $\alpha = 2$ of \lmaref{boundnm}, we obtain
\begin{align*}
    \sum_{k=1}^{h-1} k^2 \left( \frac{N(\mfp)^{h} - N(\mfp)^{h-1}}{N(\mfp)^k(N(\mfp)^h - 1)} \right) = \log \log x + \mathfrak{C}_3' + O_h \left( \frac{1}{\log x} \right),
\end{align*}
where $\mathfrak{C}_3'$ is given in \eqref{def-C3'}. 

Combining the last four results, we obtain
\begin{equation}\label{bound_sum1}
    \sum_{\substack{\mfm \in \mathcal{S}_h(x)}} \sum_{\substack{\mfp \\ n_\mfp(\mfm) = k}} k^2 =  \frac{\kappa}{\zeta_\mcm(h)} x \log \log x + \frac{\mathfrak{C}_3' \kappa}{\zeta_\mcm(h)} x + O_h \left( \frac{x}{\log x} \right).
\end{equation}

Next, for the second sum in \eqref{mainpartOmega}, we rewrite the sum, use \lmaref{hfreeidealrestrict} for two primes $\mfp$ and $\mfq$ and \rmkref{remark1} to obtain
\begin{align}\label{mainpart2Omega}
    & \sum_{\substack{\mfm \in \mathcal{S}_h(x)}} \sum_{\substack{\mfp,\mfq \\ n_\mfp(\mfm) = k, n_\mfq(\mfm) = l,  \mfp \neq \mfq }}  (k l) \notag \\
    & = \sum_{\substack{\mfp, \mfq \\ \mfp \neq \mfq, N(\mfp) N(\mfq) \leq x}} \sum_{k=1}^{r_\mfp} \sum_{l=1}^{r_\mfq} (k l) \sum_{\substack{\mfm \in  \mathcal{S}_h(x/N(\mfp)^k N(\mfq)^l) \\ n_\mfp(\mfm) = n_\mfq(\mfm)= 0}} 1 \notag \\
    & = \sum_{\substack{\mfp,\mfq \\ \mfp \neq \mfq, N(\mfp) N(\mfq) \leq x}} \sum_{k=1}^{r_\mfp} \sum_{l=1}^{r_\mfq} \Bigg( \left( \frac{k(N(\mfp)^{h} - N(\mfp)^{h-1})}{N(\mfp)^{k}(N(\mfp)^h - 1)} \right) \left( \frac{l(N(\mfq)^{h} - N(\mfq)^{h-1})}{N(\mfq)^{l}(N(\mfq)^h - 1)} \right) \frac{x}{\zeta_\mcm(h)} \notag \\
    & \hspace{.5cm} + O_h \left( \frac{x^{\tau}}{N(\mfp)^{k \tau} N(\mfq)^{l \tau}} \right) \Bigg). 
\end{align}
By part 1 of \lmaref{boundnm} with $\alpha = \tau$ and \lmaref{sumplogp}, we obtain
\begin{align*}
   x^{\tau} \sum_{\substack{\mfp, \mfq \\ N(\mfp) N(\mfq) \leq x}}  \frac{1}{N(\mfp) N(\mfq)^{\tau}} 
    & \ll_{h} x \sum_{\substack{N(\mfp) \leq x/2}} \left(  \frac{1}{N(\mfp) \log(x/N(\mfp))}\right) 
   \ll_{h} \frac{x \log \log x}{\log x}.
\end{align*}
Thus, the error term in \eqref{mainpart2Omega} is estimated as 
\begin{equation}\label{mainpart3Omega}
    \sum_{\substack{\mfp,\mfq \\ \mfp \neq \mfq, N(\mfp) N(\mfq) \leq x}} \sum_{k=1}^{r_\mfp} \sum_{l=1}^{r_\mfq} \frac{x^{\tau}}{N(\mfp)^{k \tau} N(\mfq)^{l \tau}}
    \ll x^{\tau} \sum_{\substack{\mfp, \mfq \\ N(\mfp) N(\mfq) \leq x}}  \frac{1}{N(\mfp)^\tau N(\mfq)^{\tau}} 
    \ll_h \frac{x \log \log x}{\log x}.
\end{equation}
Thus, it remains to estimate the main term, in particular, the term
$$ \sum_{\substack{\mfp,\mfq \\ \mfp \neq \mfq, N(\mfp) N(\mfq) \leq x}} \sum_{k=1}^{r_\mfp} \sum_{l=1}^{r_\mfq} \left( \frac{k(N(\mfp)^{h} - N(\mfp)^{h-1})}{N(\mfp)^{k}(N(\mfp)^h - 1)} \right) \left( \frac{l(N(\mfq)^{h} - N(\mfq)^{h-1})}{N(\mfq)^{l}(N(\mfq)^h - 1)} \right).$$
Consider again the set
$$R:= \left\{ \mfp  \ : \ N(\mfp) \leq x, \ \left\lfloor \frac{\log x}{\log N(\mfp)} \right\rfloor < h-1 \right\}.$$
Using the definition of $r_\mfp$ and the symmetry of $\mfp$ and $\mfq$, we can rewrite 
\begin{align}\label{partsOmega}
    & \sum_{\substack{\mfp,\mfq \\ \mfp \neq \mfq, N(\mfp) N(\mfq) \leq x}} \sum_{k=1}^{r_\mfp} \sum_{l=1}^{r_\mfq} \left( \frac{k(N(\mfp)^{h} - N(\mfp)^{h-1})}{N(\mfp)^{k}(N(\mfp)^h - 1)} \right) \left( \frac{l(N(\mfq)^{h} - N(\mfq)^{h-1})}{N(\mfq)^{l}(N(\mfq)^h - 1)} \right) \notag \\
    & = \sum_{\substack{\mfp,\mfq \\ \mfp \neq \mfq, N(\mfp) N(\mfq) \leq x}} \sum_{k=1}^{h-1} \sum_{l=1}^{h-1} \left( \frac{k(N(\mfp)^{h} - N(\mfp)^{h-1})}{N(\mfp)^{k}(N(\mfp)^h - 1)} \right) \left( \frac{l(N(\mfq)^{h} - N(\mfq)^{h-1})}{N(\mfq)^{l}(N(\mfq)^h - 1)} \right) -  2 I_1 + I_2,
\end{align}
where
$$I_1 = \sum_{\substack{\mfp,\mfq \\ \mfp \neq \mfq, N(\mfp) N(\mfq) \leq x \\ \mfp \in R}} \sum_{k=\left\lfloor \frac{\log x}{\log N(\mfp)} \right\rfloor + 1}^{h-1} \sum_{l=1}^{h-1} \left( \frac{k(N(\mfp)^{h} - N(\mfp)^{h-1})}{N(\mfp)^{k}(N(\mfp)^h - 1)} \right) \left( \frac{l(N(\mfq)^{h} - N(\mfq)^{h-1})}{N(\mfq)^{l}(N(\mfq)^h - 1)} \right),$$
$$I_2 = \sum_{\substack{\mfp,\mfq \\ \mfp \neq \mfq, N(\mfp) N(\mfq) \leq x \\ \mfp, \mfq \in R}}  \sum_{k=\left\lfloor \frac{\log x}{\log N(\mfp)} \right\rfloor + 1}^{h-1} \sum_{l=\left\lfloor \frac{\log x}{\log \mfq} \right\rfloor + 1}^{h-1} \left( \frac{k(N(\mfp)^{h} - N(\mfp)^{h-1})}{N(\mfp)^{k}(N(\mfp)^h - 1)} \right) \left( \frac{l(N(\mfq)^{h} - N(\mfq)^{h-1})}{N(\mfq)^{l}(N(\mfq)^h - 1)} \right).$$
The sums $I_1$ and $I_2$ contribute to the error term. In fact, for $X = \mathbb{Q}$, using $\lfloor x \rfloor \geq x-1$, \lmaref{gpnt}, and \lmaref{sumplogp}, we estimate
\begin{small}
$$I_1 \ll_h \sum_{\substack{\mfp,\mfq \\ \mfp \neq \mfq, N(\mfp) N(\mfq) \leq x \\ \mfp \in R}} \frac{1}{N(\mfp)^{\frac{\log x}{\log N(\mfp)}}} \frac{1}{N(\mfq)} \ll_h \sum_{N(\mfq) \leq x/2} \frac{1}{N(\mfq)}  \left( \frac{1}{x} \sum_{N(\mfp) \leq x/N(\mfq)} 1 \right) 
\ll_h \frac{\log \log x}{\log x}.$$
\end{small}
Similarly, for $I_2$, using $\lfloor x \rfloor \geq x-1$, \lmaref{gpnt}, and \lmaref{sumplogp} again, we have
$$I_2 \ll_h \sum_{\substack{\mfp,\mfq \\ \mfp \neq \mfq, N(\mfp) N(\mfq) \leq x \\ \mfp,\mfq \in R}} \frac{1}{N(\mfp)^{\frac{\log x}{\log N(\mfp)}}} \frac{1}{N(\mfq)^{\frac{\log x}{\log N(\mfq)}}} \ll_h \frac{1}{x} \sum_{N(\mfq) \leq x/2} \left( \frac{1}{x} \sum_{N(\mfp) \leq x/N(\mfq)} 1 \right) \ll_h \frac{\log \log x}{x \log x}.$$
For $X = \{ q^z : z \in \mathbb{Z} \}$, a similar result as above can be proved using $x/q$ instead of $x/2$.

We next estimate the main term in \eqref{partsOmega}. First, using
$$\frac{N(\mfp)^{h} - N(\mfp)^{h-1}}{N(\mfp)^h - 1} \sum_{k=1}^{h-1} \frac{k}{N(\mfp)^k}  =  \frac{N(\mfp)^h-h N(\mfp)+h-1}{(N(\mfp)-1)(N(\mfp)^h-1)},$$ 
and a similar result for $\mfq$, we rewrite the sum as
\begin{align}\label{part1Omega}
    & \sum_{\substack{\mfp,\mfq \\ \mfp \neq \mfq, N(\mfp) N(\mfq) \leq x}} \sum_{k=1}^{h-1} \sum_{l=1}^{h-1} \left( \frac{k(N(\mfp)^{h} - N(\mfp)^{h-1})}{N(\mfp)^{k}(N(\mfp)^h - 1)} \right) \left( \frac{l(\mfq^{h} - \mfq^{h-1})}{\mfq^{l}(\mfq^h - 1)} \right) \notag \\
    & = \sum_{\substack{\mfp,\mfq \\ N(\mfp) N(\mfq) \leq x}} \left( \frac{N(\mfp)^{h} -h N(\mfp) + h - 1}{(N(\mfp)-1)(N(\mfp)^h - 1)} \right) \left( \frac{N(\mfq)^{h} -h N(\mfq) + h - 1}{(N(\mfq)-1)(N(\mfq)^h - 1)} \right) \notag \\
    & \hspace{.5cm} - \sum_{\substack{N(\mfp) \leq x^{1/2}}} \left( \frac{N(\mfp)^{h} -h N(\mfp) + h - 1}{(N(\mfp)-1)(N(\mfp)^h - 1)} \right)^2.
\end{align}
The second sum above is estimated using part 4 of \lmaref{boundnm} as
\begin{align}\label{part2Omega}
    \sum_{N(\mfp) \leq x^{1/2}} \left( \frac{N(\mfp)^{h} -h N(\mfp) + h - 1}{(N(\mfp)-1)(N(\mfp)^h - 1)} \right)^2 
    & = \sum_{\mfp} \left( \frac{N(\mfp)^{h} -h N(\mfp) + h - 1}{(N(\mfp)-1)(N(\mfp)^h - 1)} \right)^2 + O \left( \frac{1}{x^{1/2} \log x}\right).
\end{align}
For the first sum on the right-hand side in \eqref{part1Omega}, using 
$$\frac{N(\mfp)^h-h N(\mfp)+h-1}{(N(\mfp)-1)(N(\mfp)^h-1)} = \frac{1}{N(\mfp)} + \frac{N(\mfp)^h -h N(\mfp)^2 + h N(\mfp) - 1}{N(\mfp)(N(\mfp)-1)(N(\mfp)^h - 1)},$$ 
and the symmetry in $\mfp$ and $\mfq$, we have
\begin{align}\label{part3Omega}
    & \sum_{\substack{\mfp,\mfq \\ N(\mfp) N(\mfq) \leq x}} \left( \frac{N(\mfp)^{h} -h N(\mfp) + h - 1}{(N(\mfp)-1)(N(\mfp)^h - 1)} \right) \left( \frac{N(\mfq)^{h} -h N(\mfq) + h - 1}{(N(\mfq)-1)(N(\mfq)^h - 1)} \right) \notag \\
    & = \sum_{\substack{\mfp,\mfq \\ N(\mfp) N(\mfq) \leq x}} \frac{1}{N(\mfp) N(\mfq)} + 2 \sum_{\substack{\mfp,\mfq \\ N(\mfp) N(\mfq) \leq x}} \frac{1}{N(\mfp)} \frac{N(\mfq)^h -h N(\mfq)^2 + h N(\mfq) - 1}{N(\mfq)(N(\mfq)-1)(N(\mfq)^h - 1)} \notag \\
    & + \sum_{\substack{\mfp,\mfq \\ N(\mfp) N(\mfq) \leq x}} \left( \frac{N(\mfp)^h -h N(\mfp)^2 + h N(\mfp) - 1}{N(\mfp)(N(\mfp)-1)(N(\mfp)^h - 1)} \right) \left( \frac{N(\mfq)^h -h N(\mfq)^2 + h N(\mfq) - 1}{N(\mfq)(N(\mfq)-1)(N(\mfq)^h - 1)} \right).
\end{align}
We estimate the sums on the right-hand side above separately. For the first sum, in the case $X = \mathbb{Q}$, we use \lmaref{gsaidak}. For the third sum, we use part 4 of \lmaref{boundnm}, and \lmaref{sumplogp} to obtain
\begin{align}\label{part4Omega}
    & \sum_{\substack{\mfp,\mfq \\ N(\mfp) N(\mfq) \leq x}} \left( \frac{N(\mfp)^h -h N(\mfp)^2 + h N(\mfp) - 1}{N(\mfp)(N(\mfp)-1)(N(\mfp)^h - 1)} \right) \left( \frac{N(\mfq)^h -h N(\mfq)^2 + h N(\mfq) - 1}{N(\mfq)(N(\mfq)-1)(N(\mfq)^h - 1)} \right) \notag \\
    & = \sum_{\substack{N(\mfp) \leq x/2}} \left( \frac{N(\mfp)^h -h N(\mfp)^2 + h N(\mfp) - 1}{N(\mfp)(N(\mfp)-1)(N(\mfp)^h - 1)} \right) \sum_{\substack{N(\mfq) \leq x/N(\mfp)}} \left( \frac{N(\mfq)^h -h N(\mfq)^2 + h N(\mfq) - 1}{N(\mfq)(N(\mfq)-1)(N(\mfq)^h - 1)} \right) \notag \\
    & = \left( \sum_{\mfp} \frac{N(\mfp)^h -h N(\mfp)^2 + h N(\mfp) - 1}{N(\mfp)(N(\mfp)-1)(N(\mfp)^h - 1)} \right)^2 + O \left( \frac{\log \log x}{x \log x}\right) .
\end{align}

Now, for the second sum in \eqref{part3Omega}, we use parts 3 and 4 of \lmaref{boundnm}, $\log \log (x/2) = \log \log x + O(1/\log x)$ and \lmaref{gpnt} to obtain
\begin{align}\label{part5Omega}
    &  \sum_{\substack{\mfp,\mfq \\ N(\mfp) N(\mfq) \leq x}} \frac{1}{N(\mfp)} \left( \frac{N(\mfq)^h -h N(\mfq)^2 + h N(\mfq) - 1}{N(\mfq)(N(\mfq)-1)(N(\mfq)^h - 1)} \right)  \notag\\
    & = \sum_{\substack{N(\mfp) \leq x/2}} \frac{1}{N(\mfp)} \sum_{\substack{N(\mfq) \leq x/N(\mfp)}} \left( \frac{N(\mfq)^h -h N(\mfq)^2 + h N(\mfq) - 1}{N(\mfq)(N(\mfq)-1)(N(\mfq)^h - 1)} \right)  \notag\\
    & = \left( \sum_{\mfp} \frac{N(\mfp)^h -h N(\mfp)^2 + h N(\mfp) - 1}{N(\mfp)(N(\mfp)-1)(N(\mfp)^h - 1)}  \right) \left( \log \log (x/2) + \mfa + O \left( \frac{1}{\log x} \right) \right) \notag\\
    & \hspace{.5cm} + O \left( \frac{1}{x} \sum_{\substack{ N(\mfp) \leq x/2}} \frac{1}{\log(x/N(\mfp))} \right) \notag\\
    & = \left( \sum_{\mfp} \frac{N(\mfp)^h -h N(\mfp)^2 + h N(\mfp) - 1}{N(\mfp)(N(\mfp)-1)(N(\mfp)^h - 1)}  \right) \left( \log \log x + \mfa \right) + O \left( \frac{1}{\log x} \right).
\end{align}
For $X = \{ q^z : z \in \mathbb{Z} \}$, similar results to \eqref{part4Omega} and \eqref{part5Omega} can be proved using $x/q$ instead of $x/2$ and the identity $\log \log (x/q) = \log \log x + O(1/\log x)$.

Combining \eqref{part3Omega}, \eqref{part4Omega}, and \eqref{part5Omega} with \lmaref{gsaidak} and \eqref{def-C3}, we obtain
\begin{align*}
    & \sum_{\substack{\mfp,\mfq \\ N(\mfp) N(\mfq) \leq x}} \left( \frac{N(\mfp)^{h} -h N(\mfp) + h - 1}{(N(\mfp)-1)(N(\mfp)^h - 1)} \right) \left( \frac{N(\mfq)^{h} -h N(\mfq) + h - 1}{(N(\mfq)-1)(N(\mfq)^h - 1)} \right) \\
    & = (\log \log x)^2 + 2 \mathfrak{C}_3 \log \log x + \mathfrak{C}_3^2 + \mathfrak{B} + O \left( \frac{\log \log x}{\log x} \right),
\end{align*}
where $\mathfrak{C}_3$ is defined in \eqref{def-C3}.

Combining \eqref{mainpartOmega}, \eqref{bound_sum1}, \eqref{mainpart2Omega}, \eqref{mainpart3Omega}, \eqref{partsOmega}, \eqref{part1Omega}, and \eqref{part2Omega} with the above equation, we obtain the required second moment for $\Omega(\mfm)$ over $h$-free elements.
This completes the proof of \thmref{hfreeOmega}.
\end{proof}
\section{The moments of \texorpdfstring{$\Omega(n)$}{} over \texorpdfstring{$h$}{}-full numbers}

This section proves \thmref{hfullOmega} by establishing the first and the second moments of $\Omega(\mfm)$ over $h$-full elements.

First, we recall the constants
$$\mathfrak{D}_3 = h (\mfa - \log h) + \sum_\mfp \frac{(h+1)N(\mfp) - h N(\mfp)^{1-(1/h)} - h N(\mfp)^{1/h} + h}{N(\mfp) (N(\mfp)^{1/h} - 1) (N(\mfp) - N(\mfp)^{1-(1/h)} +1)},$$
\begin{small}
\begin{align*}
    & \mathfrak{D}_3' \notag \\
    & = h^2(\mfa - \log h) +  \notag \\
    & \sum_\mfp \left( \frac{(2h^2 + 2h - 1)N(\mfp)^{\frac{1 + h}{h}} - (1 + h)^2N(\mfp)^{\frac{2 + h}{h}} - h^2(N(\mfp) - N(\mfp)^{\frac{1}{h}} + 2N(\mfp)^{\frac{2}{h}} - N(\mfp)^{\frac{3}{h}})}{N(\mfp)(-N(\mfp)^{\frac{1 + h}{h}} - N(\mfp)^{\frac{1}{h}} + N(\mfp))(-1 + N(\mfp)^{\frac{1}{h}})^2} \right) ,
\end{align*}
\end{small}
and
$$\mathfrak{D}_4 := \mathfrak{D}_3^2+ \mathfrak{D}_3' + h^2 \mathfrak{B} - \sum_\mfp \left( \frac{h(N(\mfp)^{\frac{1}{h}}-1)+1}{(N(\mfp)^{\frac{1}{h}}-1) (N(\mfp) - N(\mfp)^{1-\frac{1}{h}} +1)} \right)^2.$$
\begin{proof}[\textbf{Proof of \thmref{hfullOmega}}]
Using $\mfm = k \mfp + \mathfrak{y}$ with $n_\mfp(\mathfrak{y}) = 0$ when $n_\mfp(\mfm) = k$,  and \lmaref{hfullidealsrestrict} for a single prime $\mfp$, we obtain
\begin{align}\label{mainomegahfull1Omega}
\sum_{\mfm \in \mathcal{N}_h(x)} \Omega(\mfm) 
& = \sum_{N(\mfp) \leq x^{1/h}} \sum_{k = h}^{\lfloor \frac{\log x}{\log N(\mfp)} \rfloor} k \sum_{\substack{\mathfrak{y} \in \mathcal{N}_h(x/N(\mfp)^k) \\ n_\mfp(\mathfrak{y}) = 0}} 1 \notag \\
& = \kappa \gamma_{h} x^{1/h} \sum_{N(\mfp) \leq x^{1/h}} \sum_{k = h}^{\lfloor \frac{\log x}{\log N(\mfp)} \rfloor} \left( \frac{k}{N(\mfp)^{k/h} \left( 1 + \frac{N(\mfp)^{-1}}{1-N(\mfp)^{-1/h}}\right)} \right) \notag \\
& \hspace{.2cm} + O_h \left( \sum_{N(\mfp) \leq x^{1/h}} \sum_{k = h}^{\lfloor \frac{\log x}{\log N(\mfp)} \rfloor} k R_{\mathcal{N}_h}(x/N(\mfp)^k) \right).
\end{align}
First, we study the main term above. Note that
\begin{align}\label{sumkp^k/h}
    \sum_{k = h}^{r} k a^k 
    & = \frac{h a^h}{(1-a)} + \frac{a^{h+1}}{(1-a)^2} +  \frac{a^{r + 1}(ar - r - 1)}{(1 - a)^2}.
\end{align}
Using the above formula with $r = \lfloor \frac{\log x}{\log N(\mfp)} \rfloor$ and $a = N(\mfp)^{-1/h}$, we obtain
\begin{align}\label{need-es}
    & \sum_{N(\mfp) \leq x^{1/h}} \sum_{k = h}^{\lfloor \frac{\log x}{\log N(\mfp)} \rfloor} \left( \frac{k}{N(\mfp)^{k/h} \left( 1 + \frac{N(\mfp)^{-1}}{1-N(\mfp)^{-1/h}}\right)} \right) \notag\\
    & = \sum_{N(\mfp) \leq x^{1/h}} \Bigg( \frac{h}{N(\mfp)(1- N(\mfp)^{-1/h}+ N(\mfp)^{-1})} \notag \\
    & \hspace{.5cm} + \frac{N(\mfp)^{-1/h}}{N(\mfp)(1- N(\mfp)^{-1/h}) (1- N(\mfp)^{-1/h}+ N(\mfp)^{-1})} \Bigg) \notag \\
    & \hspace{.5cm} + O_h \left( \sum_{N(\mfp) \leq x^{1/h}} \frac{\left( \lfloor \frac{\log x}{\log N(\mfp)} \rfloor \right) N(\mfp)^{-(\lfloor \frac{\log x}{\log N(\mfp)} \rfloor+1)/h}}{(1- N(\mfp)^{-1/h}) (1- N(\mfp)^{-1/h}+ N(\mfp)^{-1})} \right).
\end{align}
By partial summation and \lmaref{gpnt}, it follows that for any natural number $k$,
\begin{equation}\label{sum1/logp}
\sum_{N(\mfp) \leq y} \frac{1}{(\log N(\mfp))^k} \ll \frac{y}{(\log y)^{k+1}}.   
\end{equation}
Using \eqref{sum1/logp} with $k=1$ and $y = x^{1/h}$, and $x -1 < \lfloor x \rfloor \leq x$, we can estimate the error term in \eqref{need-es} as
$$\sum_{N(\mfp) \leq x^{1/h}} \frac{\left( \lfloor \frac{\log x}{\log N(\mfp)} \rfloor \right) N(\mfp)^{-(\lfloor \frac{\log x}{\log N(\mfp)} \rfloor+1)/h}}{(1- N(\mfp)^{-1/h})  (1- N(\mfp)^{-1/h}+ N(\mfp)^{-1})} \ll_h \frac{\log x}{x^{1/h}} \sum_{N(\mfp) \leq x^{1/h}} \frac{1}{\log N(\mfp)} \ll_h \frac{1}{\log x}.$$
Thus, we obtain
\begin{align}\label{sum_needOmega}
& \kappa \gamma_{h} x^{1/h} \sum_{N(\mfp) \leq x^{1/h}} \sum_{k = h}^{\lfloor \frac{\log x}{\log N(\mfp)} \rfloor} \left( \frac{k}{N(\mfp)^{k/h} \left( 1 + \frac{N(\mfp)^{-1}}{1-N(\mfp)^{-1/h}}\right)} \right) \notag \\
& = h \kappa \gamma_{h} x^{1/h} \sum_{N(\mfp) \leq x^{1/h}}  \frac{1}{N(\mfp)(1- N(\mfp)^{-1/h}+ N(\mfp)^{-1})} \notag \\
& \hspace{.5cm} + \kappa \gamma_{h} x^{1/h} \sum_{N(\mfp) \leq x^{1/h}} \frac{1}{(N(\mfp)^{1/h}- 1) (N(\mfp)- N(\mfp)^{1-1/h}+ 1)} + O_h \left( \frac{x^{1/h}}{\log x} \right).
\end{align}

Using part 4 of \lmaref{boundnm} with $\alpha = 1+ (1/h)$, we can estimate the second sum on the right side above as
\begin{align*}
    & \sum_{N(\mfp) \leq x^{1/h}} \frac{1}{(N(\mfp)^{1/h}- 1) (N(\mfp)- N(\mfp)^{1-1/h}+ 1)} \\
    & = \sum_{\mfp} \frac{1}{(N(\mfp)^{1/h}- 1) (N(\mfp)- N(\mfp)^{1-1/h}+ 1)} + O_h \left( \frac{1}{x^{1/h^2} (\log x)} \right).
\end{align*}
Moreover, for the first sum on the right side in \eqref{sum_needOmega}, using parts 3 and 4 of \lmaref{boundnm} and the following relation
$$ \frac{1}{N(\mfp)\left( 1- N(\mfp)^{-1/h}+N(\mfp)^{-1} \right)} = \frac{1}{N(\mfp)} + \frac{N(\mfp)^{-1/h} - N(\mfp)^{-1}}{ N(\mfp) \left( 1- N(\mfp)^{-1/h}+N(\mfp)^{-1} \right) },$$
we obtain
\begin{align*}
    & \sum_{N(\mfp) \leq x^{1/h}} \frac{1}{N(\mfp)\left( 1- N(\mfp)^{-1/h}+N(\mfp)^{-1} \right)} \notag \\
    & = \log \log x + \mfa - \log h + \sum_{\mfp}  \frac{N(\mfp)^{-1/h} - N(\mfp)^{-1}}{ N(\mfp) \left( 1- N(\mfp)^{-1/h}+N(\mfp)^{-1} \right) } + O_h \left( \frac{1}{\log x} \right).
\end{align*}
Also, a slight simplification yields
\begin{align}\label{simplification}
    &  h \sum_{\mfp}  \frac{N(\mfp)^{-1/h} - N(\mfp)^{-1}}{ N(\mfp) \left( 1- N(\mfp)^{-1/h}+N(\mfp)^{-1} \right) } + \sum_{\mfp} \frac{1}{(N(\mfp)^{1/h}- 1) (N(\mfp)- N(\mfp)^{1-1/h}+ 1)} \notag \\
    & = \sum_\mfp \frac{h(N(\mfp)-N(\mfp)^{1-(1/h)}-N(\mfp)^{1/h} + 1)+N(\mfp)}{N(\mfp) (N(\mfp)^{1/h} - 1) (N(\mfp) - N(\mfp)^{1-(1/h)} +1)}. 
\end{align}
Combining the last four results with \eqref{mainomegahfull1Omega}, we obtain
\begin{align}\label{hfullfinalOmega}
\sum_{\substack{\mfm \in \mathcal{N}_h(x)}} \Omega(\mfm) & =  h \kappa \gamma_{h} x^{1/h} \log \log x + \mathfrak{D}_3 \kappa \gamma_{h} x^{1/h} + O_h \left( \frac{x^{1/h}}{\log x} \right) \notag \\
& \hspace{.5cm} + O_h \left( \sum_{N(\mfp) \leq x^{1/h}} \sum_{k = h}^{\lfloor \frac{\log x}{\log N(\mfp)} \rfloor} k R_{\mathcal{N}_h}(x/N(\mfp)^k)\right),
\end{align}
where $\mathfrak{D}_3$ is defined in \eqref{def-D3}.

From \rmkref{remark2}, recall that, we can write $R_{\mathcal{N}_h}(x) \ll x^{\upsilon/h}$ where $h/(h+1) \leq \upsilon < 1$. Using this in the error term above, using \eqref{sumkp^k/h} with $r = \lfloor \frac{\log x}{\log N(\mfp)} \rfloor$ and $a = N(\mfp)^{-\upsilon/h}$, and part 1 of \lmaref{boundnm} with $\alpha = \upsilon$, we obtain
\begin{align*}
    \sum_{N(\mfp) \leq x^{1/h}} \sum_{k = h}^{\left\lfloor \frac{\log x}{\log N(\mfp)}  \right\rfloor} k R_{\mathcal{N}_h}(x/N(\mfp)^k)  & \ll_h x^{\upsilon/h} \sum_{N(\mfp) \leq x^{1/h}} \frac{1}{N(\mfp)^\upsilon} \ll_h \frac{x^{1/h}}{\log x}.
\end{align*}
Finally, inserting the above back into \eqref{hfullfinalOmega}, we obtain
\begin{align*}
\sum_{\substack{\mfm \in \mathcal{N}_h(x)}} \Omega(\mfm) & =   h \kappa \gamma_{h} x^{1/h} \log \log x + \mathfrak{D}_3 \kappa \gamma_{h} x^{1/h} + O_h \left( \frac{x^{1/h}}{\log x} \right).
\end{align*}
This completes the first part of the proof. 

For the second part, note that
\begin{align}\label{hfullp1Omega}
    \sum_{\substack{\mfm \in \mathcal{N}_h(x)}} \Omega^2(\mfm) & = \sum_{\substack{\mfm \in \mathcal{N}_h(x)}} \left( \sum_{\substack{\mfp \\ n_\mfp(\mfm) = k}} k \right)^2 \notag \\
    & = \sum_{\substack{\mfm \in \mathcal{N}_h(x)}} \sum_{\substack{\mfp \\ n_\mfp(\mfm) = k}} k^2 + \sum_{\substack{\mfm \in \mathcal{N}_h(x)}} \sum_{\substack{\mfp,\mfq \\ n_\mfp(\mfm) = k, n_\mfq(\mfm) = l,  \mfp \neq \mfq }} (k l).
\end{align}
For the first sum on the right side above, we rewrite the sum as
$$\sum_{\substack{\mfm \in \mathcal{N}_h(x)}} \sum_{\substack{\mfp \\ n_\mfp(\mfm) = k}} k^2 = \sum_{N(\mfp) \leq x^{1/h}} \sum_{k = h}^{\lfloor \frac{\log x}{\log N(\mfp)} \rfloor} \sum_{\substack{\mfm \in \mathcal{N}_h(x) \\ n_\mfp(\mfm) = k}} k^2 = \sum_{N(\mfp) \leq x^{1/h}} \sum_{k = h}^{\lfloor \frac{\log x}{\log N(\mfp)} \rfloor} k^2 \sum_{\substack{\mathfrak{y} \in \mathcal{N}_h(x/N(\mfp)^k) \\ n_\mfp(\mathfrak{y}) = 0}} 1.$$
Working similarly to \eqref{mainomegahfull1Omega}, we obtain
\begin{align}\label{need-for-constant}
    \sum_{\substack{\mfm \in \mathcal{N}_h(x)}} \sum_{\substack{\mfp \\ n_\mfp(\mfm) = k}} k^2 & = \kappa \gamma_{h} x^{1/h} \sum_{N(\mfp) \leq x^{1/h}} \frac{1 - N(\mfp)^{-1/h}}{\left( 1 - N(\mfp)^{-1/h} + N(\mfp)^{-1} \right)} \sum_{k = h}^{\lfloor \frac{\log x}{\log N(\mfp)} \rfloor} \left( \frac{k^2}{N(\mfp)^{k/h}} \right) \notag \\
    & \hspace{.5cm} + O_h \left( \sum_{N(\mfp) \leq x^{1/h}} \sum_{k = h}^{\lfloor \frac{\log x}{\log N(\mfp)} \rfloor} k^2 R_{\mathcal{N}_h}(x/N(\mfp)^k) \right).
\end{align}
Note that
\begin{align}\label{need-k^2}
    & \sum_{k = h}^r k^2 a^k \notag \\
    & = \frac{(a^2r^2 + (-2r^2 - 2r + 1)a + (r + 1)^2)a^{r + 1} - a^h((h - 1)^2a^2 + (-2h^2 + 2h + 1)a + h^2)}{(a - 1)^3} \notag \\
    & = \frac{h^2 a^h}{(1-a)^3} + \frac{a^{h+1}(-2h^2+2h+1)}{(1-a)^3} + \frac{a^{h+2}(h-1)^2}{(1-a)^3} \notag \\
    & \hspace{.5cm} + \frac{a^{r + 1}(a^2r^2 + (-2r^2 - 2r + 1)a + (r + 1)^2)}{(a - 1)^3}.
\end{align}
Using the above relation with $r = \lfloor \frac{\log x}{\log N(\mfp)} \rfloor$ and $a = N(\mfp)^{-1/h}$, using $x -1 < \lfloor x \rfloor \leq x$, and using \eqref{sum1/logp} with $k=2$ and $y = x^{1/h}$, we obtain
\begin{small}
\begin{align*}
    & \sum_{N(\mfp) \leq x^{1/h}} \frac{1 - N(\mfp)^{-1/h}}{\left( 1 - N(\mfp)^{-1/h} + N(\mfp)^{-1} \right)} \sum_{k = h}^{\lfloor \frac{\log x}{\log N(\mfp)} \rfloor} \left( \frac{k^2}{N(\mfp)^{k/h}} \right) \\
    & = \sum_{N(\mfp) \leq x^{1/h}} \frac{h^2 N(\mfp)^{-1} + N(\mfp)^{-1-\frac{1}{h}}(-2h^2+2h+1) + N(\mfp)^{-1-\frac{2}{h}}(h-1)^2}{(1-N(\mfp)^{-\frac{1}{h}})^2 (1 - N(\mfp)^{-\frac{1}{h}} + N(\mfp)^{-1})} \\
    & \hspace{.5cm} + O \left( \sum_{N(\mfp) \leq x^{1/h}} \frac{\left( \frac{\log x}{\log N(\mfp)} \right)^2}{N(\mfp)^{\frac{ \frac{\log x}{\log N(\mfp)} }{h}}}\right) \\
    & = \sum_{N(\mfp) \leq x^{1/h}}\frac{h^2 + N(\mfp)^{-\frac{1}{h}}(-2h^2+2h+1) + N(\mfp)^{-\frac{2}{h}}(h-1)^2}{N(\mfp) (1-N(\mfp)^{-\frac{1}{h}})^2 (1 - N(\mfp)^{-\frac{1}{h}} + N(\mfp)^{-1})} \\
    & \hspace{.5cm} + O \left( \frac{(\log x)^2}{x^{1/h}} \sum_{N(\mfp) \leq x^{1/h}} \frac{1}{(\log N(\mfp))^2} \right) \\
    & = \sum_{N(\mfp) \leq x^{1/h}} \frac{h^2}{N(\mfp)} +  \\
    & \sum_{N(\mfp) \leq x^{1/h}} \left( \frac{(2h^2 + 2h - 1)N(\mfp)^{\frac{1 + h}{h}} - (1 + h)^2N(\mfp)^{\frac{2 + h}{h}} - h^2(N(\mfp) - N(\mfp)^{\frac{1}{h}} + 2N(\mfp)^{\frac{2}{h}} - N(\mfp)^{\frac{3}{h}})}{N(\mfp)(-N(\mfp)^{\frac{1 + h}{h}} - N(\mfp)^{\frac{1}{h}} + N(\mfp))(-1 + N(\mfp)^{\frac{1}{h}})^2} \right) \\
    & + O_h \left( \frac{1}{\log x} \right).
\end{align*}
\end{small}
Notice that the second sum on the right side above is $O(1)$. Thus, using parts 3 and 4 of \lmaref{boundnm} with $\alpha = 1+(1/h)$, we obtain
\begin{small}
\begin{align*}
    & \sum_{N(\mfp) \leq x^{1/h}} \frac{1 - N(\mfp)^{-1/h}}{\left( 1 - N(\mfp)^{-1/h} + N(\mfp)^{-1} \right)} \sum_{k = h}^{\lfloor \frac{\log x}{\log N(\mfp)} \rfloor} \left( \frac{k^2}{N(\mfp)^{k/h}} \right) \\
    & = h^2 \log \log x + h^2(\mfa - \log h) \\
    & + \sum_\mfp \left( \frac{(2h^2 + 2h - 1)N(\mfp)^{\frac{1 + h}{h}} - (1 + h)^2N(\mfp)^{\frac{2 + h}{h}} - h^2(N(\mfp) - N(\mfp)^{\frac{1}{h}} + 2N(\mfp)^{\frac{2}{h}} - N(\mfp)^{\frac{3}{h}})}{N(\mfp)(-N(\mfp)^{\frac{1 + h}{h}} - N(\mfp)^{\frac{1}{h}} + N(\mfp))(-1 + N(\mfp)^{\frac{1}{h}})^2} \right) \\
    & + O_h \left( \frac{1}{\log x} \right).
\end{align*}
\end{small}
Using $R_{\mathcal{N}_h}(x) \ll x^{\upsilon/h}$ where $h/(h+1) \leq \upsilon < 1$, \eqref{need-k^2} with $r = \lfloor \frac{\log x}{\log N(\mfp)} \rfloor$ and $a = N(\mfp)^{-\upsilon/h}$, and part 1 of \lmaref{boundnm} with $\alpha = \upsilon$, we obtain
\begin{align*}
    & \sum_{N(\mfp) \leq x^{1/h}} \sum_{k = h}^{\lfloor \frac{\log x}{\log N(\mfp)} \rfloor} k^2 R_{\mathcal{N}_h}(x/N(\mfp)^k)  \ll_h x^{\upsilon/h} \sum_{N(\mfp) \leq x^{1/h}} \frac{1}{N(\mfp)^\upsilon} \ll_h \frac{x^{1/h}}{\log x}.
\end{align*}
Inserting the last two results into \eqref{need-for-constant}, we obtain
\begin{small}
\begin{align}\label{need_later-D4}
& \sum_{\substack{\mfm \in \mathcal{N}_h(x)}} \sum_{\substack{\mfp \\ n_\mfp(\mfm) = k}} k^2 \notag \\
& = \kappa \gamma_{h} x^{1/h} \Bigg( h^2 \log \log x + h^2(\mfa - \log h) \notag \\
& + \sum_\mfp \left( \frac{(2h^2 + 2h - 1)N(\mfp)^{\frac{1 + h}{h}} - (1 + h)^2N(\mfp)^{\frac{2 + h}{h}} - h^2(N(\mfp) - N(\mfp)^{\frac{1}{h}} + 2N(\mfp)^{\frac{2}{h}} - N(\mfp)^{\frac{3}{h}})}{N(\mfp)(-N(\mfp)^{\frac{1 + h}{h}} - N(\mfp)^{\frac{1}{h}} + N(\mfp))(-1 + N(\mfp)^{\frac{1}{h}})^2} \right) \Bigg) \notag \\
& + O_h \left( \frac{x^{1/h}}{\log x} \right) \notag \\
& = h^2 \kappa \gamma_{h} x^{1/h} \log \log x + \mathfrak{D}_3' \kappa \gamma_{h} x^{1/h} + O_h \left( \frac{x^{1/h}}{\log x} \right),
\end{align}
\end{small}
where $\mathfrak{D}_3'$ is defined in \eqref{def-D3'}.

For the second sum in \eqref{hfullp1Omega}, we rewrite the sum, use \lmaref{hfullidealsrestrict} with two distinct primes $\mfp$ and $\mfq$, and use $R_{\mathcal{N}_h}(x) \ll x^{\upsilon/h}$ to obtain
\begin{align}\label{hfullp2Omega}
    & \sum_{\substack{\mfm \in \mathcal{N}_h(x)}} \sum_{\substack{\mfp,\mfq \\ n_\mfp(\mfm) = k, n_\mfq(\mfm) = l,  \mfp \neq \mfq }} (k l) \notag \\
    & = \sum_{\substack{\mfp,\mfq \\ \mfp \neq \mfq, N(\mfp) N(\mfq) \leq x^{1/h}}} \sum_{k=h}^{\lfloor \frac{\log x}{\log N(\mfp)} \rfloor} \sum_{l=h}^{\lfloor \frac{\log x}{\log N(\mfq)} \rfloor} \sum_{\substack{\mfm \in \mathcal{N}_h(x/(N(\mfp)^k N(\mfq)^l)) \\ n_\mfp(\mathfrak{y}) = n_\mfq(\mathfrak{y}) = 0}} (k l) \notag \\
    & = \kappa \gamma_{h} x^{1/h} \sum_{\substack{\mfp,\mfq \\ \mfp \neq \mfq \\ N(\mfp) N(\mfq) \leq x^{1/h}}} \sum_{k=h}^{\lfloor \frac{\log x}{\log N(\mfp)} \rfloor} \sum_{l=h}^{\lfloor \frac{\log x}{\log N(\mfq)} \rfloor} \frac{k}{N(\mfp)^{k/h} \left( 1 + \frac{N(\mfp)^{-1}}{1 - N(\mfp)^{-1/h}} \right)} \frac{l}{N(\mfq)^{l/h}  \left( 1 + \frac{N(\mfq)^{-1}}{1 - N(\mfq)^{-1/h}} \right)} \notag \\
    & \hspace{.5cm} + O_h \left( x^\frac{\upsilon}{h}   \sum_{\substack{\mfp,\mfq \\ \mfp \neq \mfq \\ N(\mfp) N(\mfq) \leq x^{1/h}}} \left(\sum_{k=h}^{\lfloor \frac{\log x}{\log N(\mfp)} \rfloor} \frac{k}{N(\mfp)^{k\upsilon/h}} \right) \left( \sum_{l=h}^{\lfloor \frac{\log x}{\log N(\mfq)} \rfloor} \frac{l}{N(\mfq)^{l \upsilon/h}} \right) \right).
\end{align}
Note that taking $r \rightarrow \infty$ and $0 < a < 1$ in \eqref{sumkp^k/h}, we have
$$\sum_{k=h}^\infty k a^k \ll_h \frac{a^h}{(1-a)^2}.$$
Thus, when $X = \mathbb{Q}$, using $a = N(\mfp)^{-\upsilon/h}$ and $N(\mfq)^{-\upsilon/h}$, the bounds $N(\mfp)^{-\upsilon/h}  \leq 2^{-\upsilon/h}$ and $N(\mfq)^{-\upsilon/h} \leq 2^{-\upsilon/h}$, part 1 of \lmaref{boundnm} and \lmaref{sumplogp}, we bound the error term in \eqref{hfullp2Omega} with
\begin{align}\label{hfullp3Omega}
     & \ll x^\frac{\upsilon}{h}  \sum_{\substack{\mfp,\mfq \\ \mfp \neq \mfq, N(\mfp) N(\mfq) \leq x^{1/h}}} \left(\sum_{k=h}^{\infty} \frac{k}{N(\mfp)^{k\upsilon/h}} \right) \left( \sum_{l=h}^{\infty} \frac{l}{N(\mfq)^{l \upsilon/h}} \right)\notag \\
     & \ll_h x^\frac{\upsilon}{h} \sum_{\substack{N(\mfp) \leq x^{1/h}/2}} \frac{1}{N(\mfp)^{\upsilon}}  \sum_{\substack{N(\mfq) \leq x^{1/h}/N(\mfp)}} \frac{1}{N(\mfq)^{\upsilon}}  \notag \\
     & \ll_h x^{\frac{1}{h}} \sum_{\substack{N(\mfp) \leq x^{1/h}/2}} \frac{1}{N(\mfp) \log(x^{1/h}/N(\mfp))} \notag\\
     & \ll_h \frac{x^{\frac{1}{h}} \log \log x}{\log x}.
\end{align}
For $X = \{ q^z : z \in \mathbb{Z} \}$, a similar result as above can be proved using $x^{1/h}/q$ instead of $x^{1/h}/2$, and the bounds $N(\mfp)^{-\upsilon/h} \leq q^{-\upsilon/h}$ and $N(\mfq)^{-\upsilon/h} \leq q^{-\upsilon/h}$.

Next, we estimate the main term in \eqref{hfullp2Omega}. First note that, by \eqref{sumkp^k/h}, we have
\begin{align*}
    \sum_{k=h}^{\lfloor \frac{\log x}{\log N(\mfp)} \rfloor} \frac{k}{N(\mfp)^{k/h} \left( 1 + \frac{N(\mfp)^{-1}}{1 - N(\mfp)^{-1/h}} \right)} 
    & = \frac{h(1- N(\mfp)^{-1/h})+N(\mfp)^{-1/h}}{N(\mfp)(1- N(\mfp)^{-1/h}) (1- N(\mfp)^{-1/h}+ N(\mfp)^{-1})} \\
    & \hspace{.5cm} + O_h \left( \frac{\left( \lfloor \frac{\log x}{\log N(\mfp)} \rfloor \right) N(\mfp)^{-(\lfloor \frac{\log x}{\log N(\mfp)} \rfloor+1)/h}}{(1- N(\mfp)^{-1/h}) (1- N(\mfp)^{-1/h}+ N(\mfp)^{-1})} \right).
\end{align*}
Thus, using a similar result for a prime $\mfq$ as above and the symmetry of two primes $\mfp$ and $\mfq$, we deduce
\begin{align*}
    & \kappa \gamma_{h} x^{1/h} \sum_{\substack{\mfp,\mfq \\ \mfp \neq \mfq, N(\mfp) N(\mfq) \leq x^{1/h}}} \bigg( \sum_{k=h}^{\lfloor \frac{\log x}{\log N(\mfp)} \rfloor} \sum_{l=h}^{\lfloor \frac{\log x}{\log N(\mfq)} \rfloor} \frac{k}{N(\mfp)^{k/h} \left( 1 + \frac{N(\mfp)^{-1}}{1 - N(\mfp)^{-1/h}} \right)}  \cdot \\
    & \hspace{1cm} \frac{l}{N(\mfq)^{l/h}  \left( 1 + \frac{N(\mfq)^{-1}}{1 - N(\mfq)^{-1/h}} \right)} \bigg) \\
    & = \kappa \gamma_{h} x^{1/h}\sum_{\substack{\mfp,\mfq \\ \mfp \neq \mfq, N(\mfp) N(\mfq) \leq x^{1/h}}}  \bigg( \frac{h(1- N(\mfp)^{-1/h})+N(\mfp)^{-1/h}}{N(\mfp)(1- N(\mfp)^{-1/h}) (1- N(\mfp)^{-1/h}+ N(\mfp)^{-1})}  \cdot \\
    & \hspace{1cm} \frac{h(1- N(\mfq)^{-1/h})+N(\mfq)^{-1/h}}{N(\mfq)(1- N(\mfq)^{-1/h}) (1- N(\mfq)^{-1/h}+ N(\mfq)^{-1})} \bigg) + 2 J_1 + J_2,
\end{align*}
where
\begin{align*}
    J_1 & \ll_h \kappa \gamma_{h} x^{1/h} \sum_{\substack{\mfp,\mfq \\ \mfp \neq \mfq, N(\mfp) N(\mfq) \leq x^{1/h}}} \bigg( \frac{\left( \lfloor \frac{\log x}{\log N(\mfp)} \rfloor \right) N(\mfp)^{-(\lfloor \frac{\log x}{\log N(\mfp)} \rfloor+1)/h}}{(1- N(\mfp)^{-1/h}) (1- N(\mfp)^{-1/h}+ N(\mfp)^{-1})}  \cdot  \\
    & \hspace{1cm}\frac{h(1- N(\mfq)^{-1/h})+N(\mfq)^{-1/h}}{N(\mfq)(1- N(\mfq)^{-1/h}) (1- N(\mfq)^{-1/h}+ N(\mfq)^{-1})} \bigg),
\end{align*}
and
\begin{align*}
    J_2 & \ll_h \kappa \gamma_{h} x^{1/h} \sum_{\substack{\mfp,\mfq \\ \mfp \neq \mfq, N(\mfp) N(\mfq) \leq x^{1/h}}} \bigg( \frac{\left( \lfloor \frac{\log x}{\log N(\mfp)} \rfloor \right) N(\mfp)^{-(\lfloor \frac{\log x}{\log N(\mfp)} \rfloor+1)/h}}{(1- N(\mfp)^{-1/h}) (1- N(\mfp)^{-1/h}+ N(\mfp)^{-1})} \cdot \\
    & \hspace{1cm} \frac{\left( \lfloor \frac{\log x}{\log N(\mfq)} \rfloor \right) N(\mfq)^{-(\lfloor \frac{\log x}{\log N(\mfq)} \rfloor+1)/h}}{(1- N(\mfq)^{-1/h}) (1- N(\mfq)^{-1/h}+ N(\mfq)^{-1})} \bigg).
\end{align*}
For $X = \mathbb{Q}$, using $\lfloor x \rfloor \geq x-1$, part 3 of \lmaref{boundnm}, and \eqref{sum1/logp}, we obtain
\begin{align*}
    J_1 & \ll_h  \sum_{\substack{N(\mfp) \leq x^{1/h}/2}} \frac{\log x}{\log N(\mfp)} \left( \sum_{\substack{N(\mfq) \leq x^{1/h}/N(\mfp)}} \frac{1}{N(\mfq)}\right) \\
    & \ll_h  (\log x) (\log \log x) \sum_{\substack{N(\mfp) \leq x^{1/h}/2}} \frac{1}{\log N(\mfp)} \\
    & \ll_h \frac{x^{1/h} \log \log x}{\log x}.
\end{align*}
Moreover, using \lmaref{sumplogx/N(mfp)^2} and partial summation, we obtain
\begin{align*}
    J_2 & \ll_h \sum_{\substack{N(\mfp) \leq x^{1/h}/2}} \left( \frac{\left( \lfloor \frac{\log x}{\log N(\mfp)} \rfloor \right) N(\mfp)^{-(\lfloor \frac{\log x}{\log N(\mfp)} \rfloor-h+1)/h}}{N(\mfp)(1- N(\mfp)^{-1/h})^2} \right) \left( \sum_{\substack{N(\mfq) \leq x^{1/h}/N(\mfp)}} \frac{\log x}{\log N(\mfq)} \right) \\
    & \ll_h (\log x)^2 \sum_{\substack{N(\mfp) \leq x^{1/h}/2}} \frac{1}{N(\mfp) (\log^2 (x^{1/h}/N(\mfp))) (\log N(\mfp))} \\
    & \ll_h 1.
\end{align*}
For $X = \{ q^z : z \in \mathbb{Z} \}$, similar bounds for $J_1$ and $J_2$ can be proved using $x^{1/h}/q$ instead of $x^{1/h}/2$.

Combining the last three results, we obtain
\begin{align}\label{hfullp4Omega}
    & \kappa \gamma_{h} x^{1/h} \sum_{\substack{\mfp,\mfq \\ \mfp \neq \mfq, N(\mfp) N(\mfq) \leq x^{1/h}}} \bigg( \sum_{k=h}^{\lfloor \frac{\log x}{\log N(\mfp)} \rfloor} \sum_{l=h}^{\lfloor \frac{\log x}{\log N(\mfq)} \rfloor} \frac{k}{N(\mfp)^{k/h} \left( 1 + \frac{N(\mfp)^{-1}}{1 - N(\mfp)^{-1/h}} \right)}  \cdot \notag \\
    & \hspace{1cm} \frac{l}{N(\mfq)^{l/h}  \left( 1 + \frac{N(\mfq)^{-1}}{1 - N(\mfq)^{-1/h}} \right)} \bigg) \notag \\
    & = \kappa \gamma_{h} x^{1/h} \sum_{\substack{\mfp,\mfq \\ \mfp \neq \mfq, N(\mfp) N(\mfq) \leq x^{1/h}}}  \bigg( \frac{h(1- N(\mfp)^{-1/h})+N(\mfp)^{-1/h}}{N(\mfp)(1- N(\mfp)^{-1/h}) (1- N(\mfp)^{-1/h}+ N(\mfp)^{-1})}  \cdot \notag \\
    & \hspace{1cm} \frac{h(1- N(\mfq)^{-1/h})+N(\mfq)^{-1/h}}{N(\mfq)(1- N(\mfq)^{-1/h}) (1- N(\mfq)^{-1/h}+ N(\mfq)^{-1})} \bigg) + O_h \left( \frac{x^{1/h} \log \log x}{\log x} \right).
\end{align}
Thus, to complete the proof, we only need to estimate the main term in \eqref{hfullp4Omega}. Notice that
\begin{align*}
     & \sum_{\substack{\mfp,\mfq \\ \mfp \neq \mfq, N(\mfp) N(\mfq) \leq x^{1/h}}}  \frac{h(1- N(\mfp)^{-1/h})+N(\mfp)^{-1/h}}{N(\mfp)(1- N(\mfp)^{-1/h}) (1- N(\mfp)^{-1/h}+ N(\mfp)^{-1})}  \cdot \notag \\
    & \hspace{1cm} \frac{h(1- N(\mfq)^{-1/h})+N(\mfq)^{-1/h}}{N(\mfq)(1- N(\mfq)^{-1/h}) (1- N(\mfq)^{-1/h}+ N(\mfq)^{-1})} \notag \\
    & = \sum_{\substack{\mfp,\mfq \\ N(\mfp) N(\mfq) \leq x^{1/h}}} \frac{h(1- N(\mfp)^{-1/h})+N(\mfp)^{-1/h}}{N(\mfp)(1- N(\mfp)^{-1/h}) (1- N(\mfp)^{-1/h}+ N(\mfp)^{-1})}  \notag \\
    & \hspace{1cm} \frac{h(1- N(\mfq)^{-1/h})+N(\mfq)^{-1/h}}{N(\mfq)(1- N(\mfq)^{-1/h}) (1- N(\mfq)^{-1/h}+ N(\mfq)^{-1})} \bigg) \notag \\
    & \hspace{.5cm} - \sum_{\substack{N(\mfp) \leq x^{1/(2h)}}} \left( \frac{h(1- N(\mfp)^{-1/h})+N(\mfp)^{-1/h}}{N(\mfp)(1- N(\mfp)^{-1/h}) (1- N(\mfp)^{-1/h}+ N(\mfp)^{-1})} \right)^2.
\end{align*}
Thus, using part 4 of \lmaref{boundnm}, we have
\begin{align}\label{hfullp5Omega}
    & \sum_{\substack{\mfp,\mfq \\ \mfp \neq \mfq, N(\mfp) N(\mfq) \leq x^{1/h}}}  \frac{h(1- N(\mfp)^{-1/h})+N(\mfp)^{-1/h}}{N(\mfp)(1- N(\mfp)^{-1/h}) (1- N(\mfp)^{-1/h}+ N(\mfp)^{-1})}  \cdot \notag \\
    & \hspace{1cm} \frac{h(1- N(\mfq)^{-1/h})+N(\mfq)^{-1/h}}{N(\mfq)(1- N(\mfq)^{-1/h}) (1- N(\mfq)^{-1/h}+ N(\mfq)^{-1})} \notag \\
    & = \sum_{\substack{\mfp,\mfq \\ N(\mfp) N(\mfq) \leq x^{1/h}}}  \frac{h(1- N(\mfp)^{-1/h})+N(\mfp)^{-1/h}}{N(\mfp)(1- N(\mfp)^{-1/h}) (1- N(\mfp)^{-1/h}+ N(\mfp)^{-1})}  \cdot \notag \\
    & \hspace{1cm} \frac{h(1- N(\mfq)^{-1/h})+N(\mfq)^{-1/h}}{N(\mfq)(1- N(\mfq)^{-1/h}) (1- N(\mfq)^{-1/h}+ N(\mfq)^{-1})} \notag \\
    & \hspace{.5cm} 
    - \sum_\mfp \left( \frac{h(N(\mfp)^{1/h}-1)+1}{(N(\mfp)^{1/h}-1) (N(\mfp) - N(\mfp)^{1-1/h} +1)} \right)^2 + O_h \left( \frac{1}{x^{1/(2h)} \log x} \right).
\end{align}
Now, using
\begin{align*}
    & \frac{h(1- N(\mfp)^{-1/h})+N(\mfp)^{-1/h}}{N(\mfp)(1- N(\mfp)^{-1/h}) (1- N(\mfp)^{-1/h}+ N(\mfp)^{-1})} \\
    & = \frac{h}{N(\mfp)} + \frac{h(N(\mfp)-N(\mfp)^{1-(1/h)}-N(\mfp)^{1/h} + 1)+N(\mfp)}{N(\mfp)(N(\mfp)^{1/h}-1)(N(\mfp) - N(\mfp)^{1-(1/h)} + 1)},
\end{align*}
a similar result for another prime $\mfq$, and the symmetry of primes $\mfp$ and $\mfq$, we write the first sum on the right-hand side of \eqref{hfullp5Omega} as
\begin{align*}
      & \sum_{\substack{\mfp,\mfq \\ N(\mfp) N(\mfq) \leq x^{1/h}}}  \frac{h(1- N(\mfp)^{-1/h})+N(\mfp)^{-1/h}}{N(\mfp)(1- N(\mfp)^{-1/h}) (1- N(\mfp)^{-1/h}+ N(\mfp)^{-1})}  \cdot \notag \\
    & \hspace{1cm} \frac{h(1- N(\mfq)^{-1/h})+N(\mfq)^{-1/h}}{N(\mfq)(1- N(\mfq)^{-1/h}) (1- N(\mfq)^{-1/h}+ N(\mfq)^{-1})} \notag \\
    & = h^2 \sum_{\substack{\mfp,\mfq \\ N(\mfp) N(\mfq) \leq x^{1/h}}} \frac{1}{N(\mfp) N(\mfq)} \\
    & \hspace{.5cm} + 2 h \sum_{\substack{\mfp,\mfq \\ N(\mfp) N(\mfq) \leq x^{1/h}}} \frac{1}{N(\mfp)} \left( \frac{h(N(\mfq)-N(\mfq)^{1-(1/h)}-N(\mfq)^{1/h} + 1)+N(\mfq)}{N(\mfq)(N(\mfq)^{1/h}-1)(N(\mfq) - N(\mfq)^{1-(1/h)} + 1)} \right)\\
    & \hspace{.5cm} + \sum_{\substack{\mfp,\mfq \\ N(\mfp) N(\mfq) \leq x^{1/h}}} \left( \frac{h(N(\mfp)-N(\mfp)^{1-(1/h)}-N(\mfp)^{1/h} + 1)+N(\mfp)}{N(\mfp)(N(\mfp)^{1/h}-1)(N(\mfp) - N(\mfp)^{1-(1/h)} + 1)} \right) \cdot \\
    & \hspace{1cm} \left( \frac{h(N(\mfq)-N(\mfq)^{1-(1/h)}-N(\mfq)^{1/h} + 1)+N(\mfq)}{N(\mfq)(N(\mfq)^{1/h}-1)(N(\mfq) - N(\mfq)^{1-(1/h)} + 1)} \right).
\end{align*}
The first sum on the right-hand side above is estimated using \lmaref{gsaidak}. For the second sum, when $X = \mathbb{Q}$, we use parts 1, 3, and 4 of \lmaref{boundnm}, and $\log(x^{1/h}/N(\mfp)) \geq \log 2$ together to obtain
\begin{align*}
    & \sum_{\substack{\mfp,\mfq \\ N(\mfp) N(\mfq) \leq x^{1/h}}} \frac{1}{N(\mfp)} \left( \frac{h(N(\mfq)-N(\mfq)^{1-(1/h)}-N(\mfq)^{1/h} + 1)+N(\mfq)}{N(\mfq)(N(\mfq)^{1/h}-1)(N(\mfq) - N(\mfq)^{1-(1/h)} + 1)} \right) \\
    & = \sum_{\substack{N(\mfp) \leq x^{1/h}/2}} \frac{1}{N(\mfp)} \sum_{\substack{N(\mfq) \leq x^{1/h}/N(\mfp)}} \frac{h(N(\mfq)-N(\mfq)^{1-(1/h)}-N(\mfq)^{1/h} + 1)+N(\mfq)}{N(\mfq)(N(\mfq)^{1/h}-1)(N(\mfq) - N(\mfq)^{1-(1/h)} + 1)} \\
    & = \sum_{\substack{N(\mfp) \leq x^{1/h}/2}} \frac{1}{N(\mfp)} \bigg( \sum_\mfp \frac{h(N(\mfp)-N(\mfp)^{1-(1/h)}-N(\mfp)^{1/h} + 1)+N(\mfp)}{N(\mfp)(N(\mfp)^{1/h}-1)(N(\mfp) - N(\mfp)^{1-(1/h)} + 1)} \\
    & \hspace{.5cm} +  O_h \left( \frac{N(\mfp)^{1/h}}{x^{1/h^2} \log(x^{1/h}/N(\mfp))} \right) \bigg) \\
    & = \left( \sum_\mfp \frac{h(N(\mfp)-N(\mfp)^{1-(1/h)}-N(\mfp)^{1/h} + 1)+N(\mfp)}{N(\mfp)(N(\mfp)^{1/h}-1)(N(\mfp) - N(\mfp)^{1-(1/h)} + 1)}  \right) \left(  \log \log x + \mfa - \log h \right) + O_h \left( \frac{1}{\log x} \right).
\end{align*}
Similarly, for the third sum, we obtain 
\begin{align*}
    & \sum_{\substack{\mfp,\mfq \\ N(\mfp) N(\mfq) \leq x^{1/h}}} \frac{h(N(\mfp)-N(\mfp)^{1-(1/h)}-N(\mfp)^{1/h} + 1)+N(\mfp)}{N(\mfp)(N(\mfp)^{1/h}-1)(N(\mfp) - N(\mfp)^{1-(1/h)} + 1)} \cdot \\
    & \hspace{1cm} \frac{h(N(\mfq)-N(\mfq)^{1-(1/h)}-N(\mfq)^{1/h} + 1)+N(\mfq)}{N(\mfq)(N(\mfq)^{1/h}-1)(N(\mfq) - N(\mfq)^{1-(1/h)} + 1)}  \\
    & = \sum_{\substack{N(\mfp) \leq x^{1/h}/2}} \frac{h(N(\mfp)-N(\mfp)^{1-(1/h)}-N(\mfp)^{1/h} + 1)+N(\mfp)}{N(\mfp)(N(\mfp)^{1/h}-1)(N(\mfp) - N(\mfp)^{1-(1/h)} + 1)} \cdot \\
    & \hspace{1cm} \sum_{\substack{N(\mfq) \leq x^{1/h}/N(\mfp)}} \frac{h(N(\mfq)-N(\mfq)^{1-(1/h)}-N(\mfq)^{1/h} + 1)+N(\mfq)}{N(\mfq)(N(\mfq)^{1/h}-1)(N(\mfq) - N(\mfq)^{1-(1/h)} + 1)} \\
    & =  \left(\sum_\mfp \frac{h(N(\mfp)-N(\mfp)^{1-(1/h)}-N(\mfp)^{1/h} + 1)+N(\mfp)}{N(\mfp)(N(\mfp)^{1/h}-1)(N(\mfp) - N(\mfp)^{1-(1/h)} + 1)} \right)^2 + O_h \left( \frac{\log \log x}{x^{1/h^2} \log x} \right).
\end{align*}
For $X = \{ q^z : z \in \mathbb{Z} \}$, similar results as the above two can be proved using $x^{1/h}/q$ instead of $x^{1/h}/2$, and the inequality $\log(x^{1/h}/N(\mfp)) \geq \log q$. 

Combining the last three results with \lmaref{gsaidak} and simplifying further using \eqref{simplification}, and using \eqref{def-D3} and \eqref{def-D4}, we obtain
\begin{align*}
     & \sum_{\substack{\mfp,\mfq \\ N(\mfp) N(\mfq) \leq x^{1/h}}}  \frac{h(1- N(\mfp)^{-1/h})+N(\mfp)^{-1/h}}{N(\mfp)(1- N(\mfp)^{-1/h}) (1- N(\mfp)^{-1/h}+ N(\mfp)^{-1})}  \cdot \notag \\
    & \hspace{1cm} \frac{h(1- N(\mfq)^{-1/h})+N(\mfq)^{-1/h}}{N(\mfq)(1- N(\mfq)^{-1/h}) (1- N(\mfq)^{-1/h}+ N(\mfq)^{-1})} \notag \\
     & = h^2\left( (\log \log x - \log h)^2 + 2 \mfa (\log \log x - \log h) + \mfa^2 + \mathfrak{B} \right) \notag \\
     & + 2 h \left( \sum_\mfp \frac{h(N(\mfp)-N(\mfp)^{1-(1/h)}-N(\mfp)^{1/h} + 1)+N(\mfp)}{N(\mfp)(N(\mfp)^{1/h}-1)(N(\mfp) - N(\mfp)^{1-(1/h)} + 1)}  \right) \left(  \log \log x + \mfa - \log h \right) \notag \\
     & + \left(\sum_\mfp \frac{h(N(\mfp)-N(\mfp)^{1-(1/h)}-N(\mfp)^{1/h} + 1)+N(\mfp)}{N(\mfp)(N(\mfp)^{1/h}-1)(N(\mfp) - N(\mfp)^{1-(1/h)} + 1)} \right)^2  + O_h \left( \frac{\log \log x}{\log x} \right) \\
     & = h^2 (\log \log x)^2 + 2 \mathfrak{D}_3 h \log \log x + \mathfrak{D}_3^2 + h^2 \mathfrak{B}
     + O_h \left( \frac{\log \log x}{\log x} \right).
\end{align*}
Combining the above with \eqref{hfullp1Omega}, \eqref{need_later-D4}, \eqref{hfullp2Omega}, \eqref{hfullp3Omega}, \eqref{hfullp4Omega}, and \eqref{hfullp5Omega}, we obtain
\begin{align*}
     \sum_{\substack{\mfm \in \mathcal{N}_h(x)}} \Omega^2(\mfm)
     & = h^2 \kappa \gamma_{h} x^{1/h} (\log \log x)^2 + (2\mathfrak{D}_3 + h) h \kappa \gamma_{h} x^{1/h} \log \log x + \mathfrak{D}_4 \kappa \gamma_{h} x^{1/h} \\
     & \hspace{.5cm} + O_h \left( \frac{x^{1/h} \log \log x}{\log x} \right).
\end{align*}
This completes the proof of \thmref{hfullOmega}. 
\end{proof}

In their previous work \cite{dkl5}, the authors studied the generalized function 
$$f(\mfm) = \sum_{\substack{\mfp \\ n_\mfp(\mfm) \geq 1}} \nu_\mfp(\mfm),$$ 
where the weight function was $\nu_\mfp(\mfm) = 1$. This article extends that research by exploring a new weight function, $\nu_\mfp(\mfm) = n_\mfp(\mfm)$, which represents the multiplicity of $\mfp$ in $\mfm$. The authors plan to investigate other generalized functions with different weight functions in a future article.

\bibliographystyle{plain} 
\bibliography{mybib.bib} 

\begin{thebibliography}{10}

\bibitem{dkl6}
S.~Das, W.~Kuo, and Y.-R. Liu.
\newblock Generalizations of \text{Erd\H{o}s-Kac} theorem with applications.
\newblock Submitted, 2025.

\bibitem{dkl5}
S.~Das, W.~Kuo, and Y.-R. Liu.
\newblock On the distribution of the number of distinct generators of $h$-free and $h$-full elements in an abelian monoid.
\newblock Accepted, Bordeaux Journal of Number Theory, 2025.

\bibitem{dkl4}
S.~Das, W.~Kuo, and Y.-R. Liu.
\newblock On the number of prime factors with a given multiplicity over $h$-free and $h$-full numbers.
\newblock \textit{J. Number Theory} 267, 176–201, 2025.

\bibitem{dkl2}
S.~Das, W.~Kuo, and Y.-R. Liu.
\newblock Distribution of $\omega(n)$ over $h$-free and $h$-full numbers.
\newblock {\em International Journal of Number Theory}, pages 1--23, 2025.

\bibitem{ElbGor}
D.~Elboim and O.~Gorodetsky.
\newblock Multiplicative arithmetic functions and the generalized {E}wens measure.
\newblock {\em Israel J. Math.}, 262(1):143--189, 2024.

\bibitem{ErdosKac}
P.~Erd\H{o}s and M.~Kac.
\newblock The {G}aussian law of errors in the theory of additive number theoretic functions.
\newblock {\em Amer. J. Math.}, 62:738--742, 1940.

\bibitem{hardyram}
G.~H. Hardy and S.~Ramanujan.
\newblock The normal number of prime factors of a number {$n$} [{Q}uart. {J}. {M}ath. {\bf 48} (1917), 76--92].
\newblock In {\em Collected papers of {S}rinivasa {R}amanujan}, pages 262--275. AMS Chelsea Publ., Providence, RI, 2000.

\bibitem{hw}
G.~H. Hardy and E.~M. Wright.
\newblock {\em An introduction to the theory of numbers}.
\newblock Oxford University Press, Oxford, sixth edition, 2008.
\newblock Revised by D. R. Heath-Brown and J. H. Silverman, With a foreword by Andrew Wiles.

\bibitem{jala}
R.~Jakimczuk and M.~Lal\'{\i}n.
\newblock The number of prime factors on average in certain integer sequences.
\newblock {\em J. Integer Seq.}, 25(2):Art. 22.2.3, 15, 2022.

\bibitem{jk2}
J.~Knopfmacher.
\newblock {\em Abstract analytic number theory}.
\newblock Dover Books on Advanced Mathematics. Dover Publications, Inc., New York, second edition, 1990.

\bibitem{lz}
M.~Lal\'{\i}n and Z.~Zhang.
\newblock The number of prime factors in {$h$}-free and {$h$}-full polynomials over function fields.
\newblock {\em Publ. Math. Debrecen}, 104(3-4):377--421, 2024.

\bibitem{Landau}
E.~Landau.
\newblock {\em Einf\"{u}hrung in die elementare und analytische {T}heorie der algebraischen {Z}ahlen und der {I}deale}.
\newblock Chelsea Publishing Co., New York, 1949.

\bibitem{liu}
Y.-R. Liu.
\newblock A generalization of the \text{Erd\H{o}s}-{K}ac theorem and its applications.
\newblock {\em Canad. Math. Bull.}, 47(4):589--606, 2004.

\bibitem{liuturan}
Y.-R. Liu.
\newblock A generalization of the {T}ur\'{a}n theorem and its applications.
\newblock {\em Canad. Math. Bull.}, 47(4):573--588, 2004.

\bibitem{lorenzini}
D.~Lorenzini.
\newblock {\em An invitation to arithmetic geometry}, volume~9 of {\em Graduate Studies in Mathematics}.
\newblock American Mathematical Society, Providence, RI, 1996.

\bibitem{mr}
M.~Rosen.
\newblock {\em Number theory in function fields}, volume 210 of {\em Graduate Texts in Mathematics}.
\newblock Springer-Verlag, New York, 2002.

\bibitem{saidak}
F.~Saidak.
\newblock An elementary proof of a theorem of {D}elange.
\newblock {\em C. R. Math. Acad. Sci. Soc. R. Can.}, 24(4):144--151, 2002.

\bibitem{turan}
P.~Tur\'{a}n.
\newblock On a {T}heorem of {H}ardy and {R}amanujan.
\newblock {\em J. London Math. Soc.}, 9(4):274--276, 1934.

\end{thebibliography}

\end{document}